\documentclass[12pt]{article}
\usepackage{amsmath,amsfonts, amssymb, amsmath, graphicx}

\usepackage{latexsym}
\title{Commensurated Subgroups, Semistability  and Simple Connectivity at Infinity}
\author{G. Conner and  M. Mihalik  }
\newtheorem{theorem}{Theorem}
\newtheorem{proposition}[theorem]{Proposition}
\newtheorem{lemma}[theorem]{Lemma}
\newtheorem{corollary}[theorem]{Corollary}
\newcounter{remarknum}
\newenvironment{remark}{\addvspace{12pt}\refstepcounter{remarknum}
\noindent{\bf Remark \arabic{remarknum}.}}{\par\addvspace{12pt}}
\newenvironment{proof}{\addvspace{12pt}\noindent{\bf Proof:}}{
$\Box$\par\addvspace{12pt}}
\newcounter{examplenum}
\newenvironment{example}{\addvspace{12pt}\refstepcounter{examplenum}
\noindent{\bf Example \arabic{examplenum}.}}{\par\addvspace{12pt}}
\newcounter{definitionnum}
\newenvironment{definition}{\addvspace{12pt}\refstepcounter{definitionnum}
\noindent{\bf Definition \arabic{definitionnum}.}}{\par\addvspace{12pt}}

\addtolength{\textwidth}{.7in}
\addtolength{\evensidemargin}{-0.35in}
\addtolength{\oddsidemargin}{-0.35in}
\addtolength{\textheight}{.5in} \addtolength{\topmargin}{-.25in}

\date{October 6, 2011}
\begin{document}
\maketitle
\begin{abstract} 
 A subgroup $Q$ of a group $G$ is {\it commensurated} if the commensurator of $Q$ in $G$ is the entire group $G$. Our main result is that a finitely generated group $G$ containing an infinite, finitely generated, commensurated subgroup $H$, of infinite index in $G$ is 1-ended and semistable at $\infty$. If additionally, $Q$ and $G$ are finitely presented and either $Q$ is 1-ended or the pair $(G,Q)$ has 1 filtered end, then $G$ is simply connected at 
 $\infty$. A normal subgroup of a group is commensurated, so this result is a generalization of M. Mihalik's result in \cite{M1} and of B. Jackson's result in \cite{J}. As a corollary, we give an alternate proof of V. M. Lew's theorem that a finitely generated group $G$ containing an infinite, finitely generated, subnormal subgroup of infinite index is semistable at $\infty$. So, many previously known semistability and simple connectivity at $\infty$ results for group extensions follow from the results in this paper. If $\phi:H\to H$ is a monomorphism of a finitely generated group and $\phi(H)$  has finite index in $H$, then $H$ is commensurated in the corresponding ascending HNN extension, which in turn is semistable at $\infty$. 
\end{abstract}

\section{Introduction}
Given a group $G$ and a subgroup $H$ of $G$, the element $g$ of $G$ is in the {\it commensurator} of $H$ in $G$ (denoted $Comm(H,G)$) if $gHg^{-1}\cap H$ has finite index in both $H$ and $gHg^{-1}$. In the mid-1960's, A. Borel  \cite{B} proved a series of results that highlight the critical nature of commensurators in the structure of semisimple Lie groups. These results were extended by G. A. Margulis \cite{Ma} in 1975.  If $G$ is the commensurator of $Q$ in $G$, then $Q$ is {\it commensurated} in $G$. In particular, if $H$ is normal in $G$, then $H$ is commensurated in $G$. In \cite {CM} we develop the basic theory of commensurated subgroups and show this theory closely parallels the theory of normal subgroups of a group, but with subtle differences. 

A locally-finite, connected CW-complex $X$ is {\it semistable at $\infty$} if any two proper maps $r,s:[0,\infty)\to X$ which converge to the same end are properly homotopic. The early ideas of R. Lee and F. Raymond \cite {LR} and F.E.A. Johnson \cite {FEA} on the `fundamental group of an end' were instrumental in extending the idea of semistability at $\infty$ of a space to  the notion of the semistability at $\infty$ for a finitely presented group. The best reference for the fundamentals of the subject of semistability at $\infty$ is R. Geoghegan's book \cite {Ge}. Many classes of finitely generated groups are known to be semistable at $\infty$ (see  \cite{M1},  \cite{M2},  \cite{M3}, \cite{M4} and  \cite{M5} for instance). It is unknown if all finitely presented groups are semistable at $\infty$. If a finitely presented group $G$ is semistable at $\infty$, then one can define invariants for $G$, such as the fundamental group at an end of $G$ independent of choice of basepoint ray in some associated space.
The idea of semistability at $\infty$ is also of interest in the study of cohomology of groups. R. Geoghegan and M. Mihalik have shown (\cite{GM}) that if the group $G$ is finitely presented and semistable at $\infty$, then $H^2(G, \mathbb Z G)$ is free abelian. It should be noted that a basic unsolved problem in the study of group cohomology is whether or not $H^2(G,\mathbb Z G)$ is free abelian for all finitely presented groups $G$. 

The study of ends of groups was started by H. Freudenthal \cite{F} and H. Hopf \cite{H}. A finitely generated group $G$ has either $0$, $1$, $2$, or an infinite number of ends. It is elementary to see that finitely presented groups with either $0$ or $2$-ends are semistable at $\infty$. By  \cite{M3} and Dunwoody's accessibility theorem  \cite{D1} the semistability question for finitely presented groups reduces to the question of whether or not all $1$-ended finitely presented groups are semistable at $\infty$.

The strongest result to date in this subject is the following combination result.
\begin{theorem} \label{comb} {\bf (M. Mihalik, S. Tschantz \cite{comb})}
If $G=A\ast_HB$ is an amalgamated product where $A$ and $B$ are finitely presented and semistable at $\infty$, and $H$ is finitely generated, then $G$ is semistable at $\infty$. If $G=A\ast_H$ is an HNN-extension where $A$ is finitely presented and semistable at $\infty$ and $H$ is finitely generated, then $G$ is semistable at $\infty$.
\end{theorem}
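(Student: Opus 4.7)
The plan is to pass from $G$ to the associated ``tree of spaces'' $X$ built over the Bass-Serre tree $T$ of the given splitting: each vertex $v\in T$ carries a copy $X_v$ of the universal cover of the standard $2$-complex of $A$ or $B$, and each edge $e\in T$ carries an edge space $X_e$ that deformation retracts onto a copy of the Cayley graph $\Gamma_H$ of $H$. Because $A$ and $B$ are finitely presented and $H$ is finitely generated, $X$ is locally finite with $\pi_1(X)=G$, so it suffices to show that $X$ is semistable at $\infty$.

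The first step is to classify ends of $X$ via the projection $p:X\to T$. An end $\epsilon$ of $X$ either (a) is carried by a geodesic ray in $T$, in which case proper rays converging to $\epsilon$ eventually leave every vertex space, or (b) projects to a single vertex $v$, in which case $\epsilon$ comes from an end of the vertex space $X_v$. Case (b) is handled directly by the hypothesis: any two proper rays to $\epsilon$ eventually lie in $X_v$ and are properly homotopic there by semistability of the vertex group.

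For case (a), given two proper rays $r,s$ converging to $\epsilon$, I would align them with the common geodesic ray in $T$ that they track, subdividing each into successive arcs in vertex spaces with endpoints in adjacent edge spaces. Within each vertex space $X_v$ one applies semistability of the vertex group, relative to the ends of $X_v$ picked out by the incoming and outgoing edge spaces, to properly homotope matched arcs together rel endpoints; finite generation of $H$ guarantees that $\Gamma_H$, and hence $X_e$, is connected, so endpoints on adjacent arcs can be coherently joined across edge spaces. The main technical obstacle is that $r$ and $s$ need not traverse the same edge path in $T$: they may make finite excursions off the common ray. Because $T$ is a tree and the rays are proper, past any compact set these excursions are finite detours, and semistability of the vertex spaces along each detour lets one homotope each detour back to the shared geodesic independently; the individual homotopies then concatenate into a single proper homotopy. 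The HNN case runs identically, with the two edges of $T$ over the stable letter at a vertex $v$ providing two copies of $\Gamma_H$ inside $X_v$, each connected by finite generation of $H$.
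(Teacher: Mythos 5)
This statement is not proved in the paper at all: it is quoted as background from the 86--page Memoir of Mihalik and Tschantz \cite{comb}, so there is no internal proof to compare against. Judged on its own, your sketch is the natural first attempt, but it has a genuine gap at its core. The step ``within each vertex space $X_v$ one applies semistability of the vertex group, relative to the ends of $X_v$ picked out by the incoming and outgoing edge spaces, to properly homotope matched arcs together rel endpoints'' is not a consequence of semistability of $A$ and $B$. Semistability gives you proper homotopies between proper \emph{rays} converging to a common end; what you need here is a relative statement about finite \emph{arcs} whose endpoints lie in prescribed edge spaces: that two such arcs lying outside a large compact set can be homotoped to one another, outside a smaller compact set, while their endpoints slide inside the edge spaces. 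The obstruction is the distortion of $H$ in $A$ and $B$. Since $H$ is only assumed finitely generated, the copy of $\Gamma_H$ along which an edge space is glued into $X_v$ can be arbitrarily distorted: two points of a coset $gH$ that are close in $X_v$ and far from a compact set $C$ may only be joinable \emph{within the edge space} by a path that plunges back through $C$. So the connecting paths you need ``across edge spaces'' cannot in general be chosen to stay far out, and the concatenated homotopy need not be proper.

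A litmus test, which the paper itself points out, is the case where $A$ and $B$ are free. Then every vertex space is a tree, semistability of the vertex groups is vacuous, and your argument would yield a one-page proof; yet the theorem is explicitly noted to be nontrivial in exactly this case. This shows the real content of the theorem lives precisely in the interaction between the edge spaces and the vertex spaces that your sketch treats as routine. There are also smaller issues --- the Bass--Serre tree is not locally finite when $[A:H]$ or $[B:H]$ is infinite, so your dichotomy for ends of $X$ (carried by a geodesic ray in $T$ versus trapped in one vertex space) is incomplete, and the ``finite detours'' of $r$ and $s$ off the common ray can be unboundedly long and must themselves be controlled through edge spaces --- but the missing relative/distortion control is the essential gap, and it is what the Mihalik--Tschantz argument is built to overcome.
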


This result generalizes to the obvious statement about graphs of groups and was used by Mihalik and Tschantz in  \cite{oner}, to prove that all one relator groups are semistable at $\infty$. It should be noted this result is non-trivial when $A$ and $B$ are free groups.

All word hyperbolic groups are semistable at $\infty$ (see \cite{Sw}). R. Geoghegan has shown that  a 1-ended CAT(0) group $G$ is semistable at $\infty$ if and only if  some (equivalently any) visual boundary for $G$ has the shape of a locally connected continuum  \cite{G}. It is elementary to construct a semistable at $\infty$, 1-ended CAT(0) group with non-locally connected boundary. E.g the direct product of the integers with the free group of rank 2 has visual boundary the suspension of a Cantor set - a non-locally connected space, but with the same shape as the Hawaiian earring, which is a locally connected space.  
In  \cite{M4}, a notion of semistability at $\infty$ for a finitely generated group is defined that generalizes the original definition (i.e., a finitely presented group is semistable at $\infty$ with respect to the alternative definition if and only if  it is semistable at $\infty$ with respect to the original definition). With this more general definition, the finitely generated analogs to the main results obtained in  \cite{M1} and  \cite{M2} are quite apparent. In fact, this more general definition is used to show certain finitely presented groups are semistable at $\infty$ (see \cite{M4}). In his Ph.D dissertation, Vee Ming Lew proved that 
if $G$ is a finitely generated group containing an infinite, finitely generated subnormal subgroup $H$ of infinite index in $G$, then $G$ is 1-ended and semistable at $\infty$.

Lew's proof of this theorem generalizes arguments used in the proofs in  \cite{M1} and  \cite{M2}. Our main theorem is used to produce an alternative proof of Lew's theorem. 

\begin{theorem}\label{Main}
{\bf (Main Theorem)} If a finitely generate group $G$ has an infinite finitely generated commensurated subgroup $Q$, and $Q$ has infinite index in $G$, then $G$ is one-ended and semistable at $\infty$. If additionally, $G$ and $Q$ are finitely presented and either $Q$ has one end or the pair $(G,Q)$ has one filtered end, then $G$ is simply connected at $\infty$.
\end{theorem}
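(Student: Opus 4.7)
The plan is to adapt Mihalik's argument for normal subgroups in \cite{M1} to the commensurated setting, replacing the rigid equality $gQg^{-1}=Q$ with the weaker commensurability $[Q:gQg^{-1}\cap Q]<\infty$ at the cost of introducing bounded "finite-index errors."

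\textbf{Step 1 (One-endedness).} First I would show $G$ is 1-ended. Supposing otherwise, Stallings' theorem gives a nontrivial splitting of $G$ over a finite subgroup and hence an action of $G$ on a Bass--Serre tree $T$ with no global fixed vertex. Since $Q$ is infinite and finitely generated, standard ends-of-pairs arguments show $Q$ either fixes a vertex of $T$ or has an invariant axis. The commensurability of $gQg^{-1}$ with $Q$ for every $g\in G$ forces $G$ to preserve (setwise) the fixed set of $Q$ in $T$; combined with $Q$ having infinite index in $G$, this would contradict nontriviality of the splitting. The 2-ended case is ruled out similarly because $Q$ would have to be of finite index.

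\textbf{Step 2 (Semistability framework).} Let $\Gamma$ be the Cayley graph of $G$ on a finite generating set $S$ containing a generating set for $Q$. For each $s\in S$, put $Q_s=Q\cap sQs^{-1}$, which has finite index in $Q$, and fix a finite set $F_s\subset Q$ of coset representatives for $Q_s$ in $Q$, together with a uniform bound $L$ on all their word lengths. To prove semistability it suffices to show that every edge loop in $\Gamma$ sufficiently far from $1$ is homotopic, rel basepoint on a chosen base ray, to a loop that is even farther from $1$, through a homotopy staying outside a prescribed compact set.

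\textbf{Step 3 (Core semistability argument).} Pick a sequence $q_1,q_2,\ldots$ in $Q$ tending to infinity in $Q$ (available because $Q$ is infinite and finitely generated). Given a loop $\alpha$ based on the ray, I would construct an infinite "ladder" whose rungs are the translates $q_n\alpha$, which lie in the cosets $q_nQ$ and march off to infinity because $Q$ is 1-ended by Step 1 applied to $Q$ (or by a direct Schreier-graph argument). The sides of the ladder are built edge-by-edge from $\alpha$: to pass from the rung $q_n\alpha$ to $q_{n+1}\alpha$, for each edge labeled $s$ in $\alpha$ I insert a short commutator-like loop of length bounded by $|s|+2L+|s|$, using the identity $sq = (sqs^{-1})s$ with $sqs^{-1}\in Q$ whenever $q\in Q_s$, and the $F_s$-coset decomposition to correct when $q\notin Q_s$. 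These bridging strips have uniformly bounded thickness, so the resulting infinite homotopy is proper, yielding semistability.

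\textbf{Step 4 (Simple connectivity at $\infty$).} When $G$ and $Q$ are finitely presented and either $Q$ is 1-ended or $(G,Q)$ has 1 filtered end, I would upgrade the ladder of Step 3 to a "properly embedded half-plane" of disks. The 1-endedness of $Q$ (respectively the 1-filtered-end hypothesis) allows each loop inside a single coset $gQ$ to be filled by a 2-disk that can be pushed deep inside $gQ$ (resp.\ into the filtered end of the pair); these disks then cap off the far end of the ladder, giving a disk filling out at infinity for any loop near infinity in $\Gamma$.

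\textbf{Main obstacle.} The essential difficulty, absent in the normal case, is the compounding of finite-index commensurability errors across long words: translating by a word $w$ of length $n$ conjugates $Q$ into a subgroup whose intersection with $Q$ has index growing with $n$. The hardest technical step is to show that the bridging strips in Step 3 can nonetheless be built with \emph{uniformly} bounded width depending only on $S$ and the $F_s$, by carrying the correction by $F_s$-representatives inductively along $w$ rather than letting the indices multiply. This uniform commensurability estimate is the novel input required beyond \cite{M1}.
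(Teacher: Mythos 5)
Your Step 2 does isolate the paper's key technical input: the uniform per-generator constants coming from $[Q:Q\cap sQs^{-1}]<\infty$ and bounded coset representatives are exactly Lemma \ref{approx} ($(Q,F)$-approximate path lifting for the projection $\rho$ to the Schreier graph $\Lambda(S,Q,G)$), and your resolution of the ``compounding'' obstacle --- correct one generator at a time rather than conjugating by long words --- is the right one. But the core construction in Step 3 fails as written. If the rungs are the exact left translates $q_n\alpha$, the bridge at the vertex $v_i$ of $\alpha$ joins $q_nv_i$ to $q_{n+1}v_i$ and is labeled by the conjugate $v_i^{-1}(q_n^{-1}q_{n+1})v_i$; commensurability only places this element boundedly close to $Q$ with a bound depending on $v_i$, so the bridging squares are not uniformly bounded and the identity $sq=(sqs^{-1})s$ does not rescue you, since $v_i$ is an arbitrary word, not a single generator. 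Once you repair this by rebuilding each rung edge-by-edge with $F_s$-corrections, two new problems appear that your sketch does not address: the corrected rung no longer closes up when you go once around the loop $\alpha$, and you need to know that two distinct $\mathcal{Q}$-rays (or corrected copies) based at the same vertex are properly homotopic far from a prescribed compact set --- which cannot be done inside a single coset, because $Q$ is only assumed finitely generated, not semistable. The paper avoids both issues by comparing \emph{rays} rather than pushing loops, and by introducing a second, transversal family of proper rays $s_v$: approximate lifts of proper rays in the locally finite graph $\Lambda(S,Q,G)$ chosen so that only finitely many meet any compact set (Lemma \ref{3.7}). Two $\mathcal{Q}$-rays at $v$ are then compared by sliding each along $s_v$ out to infinity (Lemmas \ref{3.6}--\ref{3.10}); your single sequence $q_n\to\infty$ supplies no such transversal direction. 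Relatedly, your assertion that $Q$ is 1-ended ``by Step 1 applied to $Q$'' is false and unnecessary --- the paper's own example $\langle x\rangle\le B(m,n)$ is two-ended.

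Two smaller points. Your Step 1 via Stallings and Bass--Serre theory can be made to work (an infinite finitely generated $Q$ without a fixed vertex has a unique minimal invariant subtree, not an ``axis''; commensurability makes it $G$-invariant, forcing $[G:Q]<\infty$; a fixed vertex is forced to be unique and hence $G$-fixed), but it is far heavier than the paper's two-line argument from the finite Hausdorff distance between cosets. Step 4 is a heuristic rather than a proof: in the case where $(G,Q)$ has one filtered end but $Q$ is not 1-ended, loops cannot be filled by pushing them deep into a single coset; the paper must attach 2-cells to $\Lambda(S,Q,G)$, approximately lift paths from the resulting complex, and push loops \emph{transversally} to the cosets using 1-endedness of $\Lambda$, none of which is captured by ``capping off the far end of the ladder.''
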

As an example, the cyclic  subgroup $\langle x\rangle$ of the Baumslag-Solitar group $B(m,n)\equiv \langle x,t : t^{-1}x^mt=x^n\rangle$ (for non-zero integers $m,n$), is commensurated in $B(m,n)$. 

A connected CW-complex $X$ is {\it simply connected at $\infty$} if for each compact set $C$ in $X$ there is a compact set $D$ in $X$ such that loops in $X-D$ are homotopically trivial in $X-C$. Simple connectivity at $\infty$ implies semistability  at $\infty$. As with semistability at $\infty$, the idea of simple connectivity at $\infty$ can be extended from spaces to finitely presented groups and if $G$ is finitely presented and simply connected at $\infty$ then $H^2(G,\mathbb Z G) $ is trivial. In his thesis \cite {Si}, L. Siebenmann developed the idea of simple connectivity at $\infty$ to give an obstruction to finding a boundary for an open manifold.  In \cite{LR}, R. Lee and F. Raymond, used the idea of the simple connectivity at $\infty$ of a group in order to analyze  manifolds covered by Euclidean space.  In \cite{J}, B. Jackson proves:

\begin{theorem} \label{Ja} 
{\bf (B. Jackson)} Suppose $1\to H\to G\to K\to 1$ is a short exact sequence of infinite finitely presented groups and either $H$ or $K$ is 1-ended, then $G$ is simply connected at $\infty$. 
\end{theorem}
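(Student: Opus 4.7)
The plan is to verify simple connectivity at infinity directly in the universal cover $\tilde X$ of a finite presentation 2-complex for $G$: for each compact $C \subset \tilde X$ I need to produce a larger compact $D$ such that any loop in $\tilde X \setminus D$ bounds a disk in $\tilde X \setminus C$. I would begin by adapting the presentation to the extension, choosing a finite set $T$ whose image generates $K$ and a finite generating set $S$ of $H$, and presenting $G$ with generators $S \cup T$ and relations that include the conjugation rules $t^{-1}st = w_{s,t}$ witnessing normality of $H$. Then the cosets $gH$ partition the vertex set of the Cayley graph into sheets, each quasi-isometric to the Cayley 2-complex of $H$, and the quotient $G \to K$ assembles these sheets as a bundle-like object over the Cayley 2-complex of $K$. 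One-endedness of $G$ is standard under these hypotheses: a two-ended group is virtually cyclic and cannot contain $H$, while an action-on-ends argument together with Stallings' theorem rules out infinitely many ends in the presence of an infinite normal subgroup of infinite index.

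The main idea is to use the commutation relations to rewrite any loop $\gamma \subset \tilde X \setminus D$ as a concatenation $\alpha\beta$ with $\alpha$ a $T$-word (base direction) and $\beta$ an $S$-word (fiber direction), with the rewriting homotopy confined to a bounded neighborhood of $\gamma$ so that it stays outside $C$. Since $\gamma$ is a loop, $\beta$ closes up inside a single coset of $H$ and $\alpha$ projects to a loop in the Cayley graph of $K$. I would then fill $\beta$ by a van Kampen diagram for $H$ located inside its coset, and fill the projected $\alpha$-loop by a van Kampen diagram in the Cayley 2-complex of $K$, lifting this fill to $\tilde X$. In Case 1 ($H$ one-ended), one-endedness of $H$ is used to translate the $\beta$-fill to a far part of its coset so as to avoid $C$, and the $\alpha$-fill is translated via the normal $H$-action into a region disjoint from $C$. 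In Case 2 ($K$ one-ended), the roles swap: one-endedness of $K$ places the $\alpha$-fill away from the projection of $C$, while finite presentability of $H$ supplies the $\beta$-fill inside a suitably chosen coset.

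The main obstacle will be the positioning step, turning path-level escape from $C$ into disk-level escape. One-endedness is a significantly weaker hypothesis than simple connectivity at infinity and supplies only paths, so the actual disks must come from the finite presentation of $G$; the technical heart of the proof should be a rectangle-filling lemma asserting that a rectangle in $\tilde X$ whose two horizontal sides are long $T$-paths and whose two vertical sides are long $S$-paths bounds a disk of controlled thickness. This rectangle device lets one convert an escape path in either factor into a disk filling of the original loop by tiling the homotopy with relator disks. Ensuring that all the relator disks (a bounded number after translation by the $G$-action) can be placed simultaneously outside $C$, and that the hinges between them line up consistently, is where the argument will require the most care.
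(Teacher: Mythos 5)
First, a point of attribution: the paper does not actually prove Theorem~\ref{Ja}; it is quoted from Jackson's paper \cite{J}, and what the paper proves instead (Section 6) is the generalization to commensurated subgroups, from which Jackson's theorem follows because a normal subgroup is commensurated and, for normal $H$, the ends of $\Lambda(S,H,G)$ are the ends of $K$. Your overall architecture --- fiber/base decomposition, conjugation relations, van Kampen fills in $H$ and in $K$, one-endedness used to relocate the fills --- is the same general strategy as that proof (and as Jackson's original one). But there is a genuine gap at the step you yourself flag as the technical heart, and as stated your two key devices do not work. (1) The global normal form: you propose to rewrite an arbitrary loop $\gamma$ as $\alpha\beta$ ($T$-word followed by $S$-word) ``with the rewriting homotopy confined to a bounded neighborhood of $\gamma$.'' There is no uniform bound here. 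Each shuffle $st\mapsto t\,w_{s,t}$ lengthens the $S$-part, and after $|\gamma|$ shuffles the intermediate vertices lie at a distance from $\gamma$ governed by the distortion of $H$ in $G$, which can be exponential in $|\gamma|$ (already for $\mathbb{Z}^2\rtimes\mathbb{Z}$ with an Anosov gluing). Since simple connectivity at infinity requires one compact $D$ working for loops of \emph{all} lengths, a neighborhood that grows with $|\gamma|$ is fatal unless you first arrange that the whole excursion happens far from $C$ --- which is exactly what still needs proving. (2) The ``rectangle-filling lemma'' for rectangles with two long $T$-sides and two long $S$-sides bounding a disk of ``controlled thickness'' is false in the uniform sense you need, for the same distortion reason.

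The correct mechanism, which your sketch is missing, is to never form a global normal form at all: one transports the \emph{entire} loop, edge by edge, one step at a time along a single escape path, so that every 2-cell used is a loop of a priori bounded length (in the paper, $\leq 2N_1+N_2+1$, where $N_1$ bounds the Hausdorff distance between adjacent cosets --- this is where normality/commensuration enters --- and $N_2$ bounds distortion only over the bounded scale $2N_1+1$, hence is finite). One-endedness is used solely to produce the escape path: in Case 1 a $\mathcal Q$-path inside the coset $vH$ reaching a point $w$ whose required depth is chosen \emph{after} the loop is given (the constants $L,E$ of the paper depend on $|\alpha|$, which is legitimate since $D$ does not), and in Case 2 a path in the quotient graph moving the loop to cosets that miss $C$ entirely, where the residual $\mathcal Q$-loop dies by simple connectivity of the coset complex (Lemma~\ref{Qkill}). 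Your phrases ``translate the fill to a far part of its coset'' and ``translated via the normal $H$-action'' gloss over precisely the annulus connecting the original boundary to its translate; producing that annulus outside $C$ with uniformly bounded cells is the theorem. To repair your writeup, replace the normal-form step and the long-sided rectangle lemma with: (i) a bounded-Hausdorff-distance lemma for adjacent cosets, (ii) a one-step transport lemma (the analogue of Lemma~\ref{3.6}/the $(\phi_{i-1},\psi_i,\phi_i^{-1},b_i^{-1})$ rectangles), and (iii) the two case-specific escape arguments above.
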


In \cite{Da}, M. Davis constructs examples of aspherical closed $n$-manifolds for $n\geq 4$, that are not covered by $\mathbb R^n$. In fact, Davis argues that the fundamental groups of his manifolds are semistable at $\infty$, but not simply connected at $\infty$ (and hence not covered by $\mathbb R^n$). All of Davis' group are subgroups of finite index in finitely generated Coxeter groups.  In  \cite{M5}, Mihalik shows all Artin and Coxeter groups are semistable at $\infty$.

\section{Commensurable Preliminaries}

If $S$ is a finite generating set for a group $G$, $\Gamma(G,S)$ the Cayley graph of $G$ with respect to $S$, and $H$ a subgroup of $G$, then for any $g_1, g_2 \in G$, the {\it Hausdorff} distance between $g_1H$ and $g_2H$, denoted $D_S(g_1H,g_2H)$, is the smallest integer $K$ such that for each element  $h$ of $H$ the edge path distance from $g_1h$ to $g_2H$ in $\Gamma$ is $\leq K$ and the edge path distance from $g_2h$ to $g_1H$ in $\Gamma$ is $\leq K$. If no such $K$ exists, then $D_S(g_1H,g_2H)=\infty$. 
In \cite{CM} we prove the following geometric characterization of commensurated subgroups of finitely generated groups. This characterization is the working definition of commensurated subgroup in this paper. 

\begin{proposition} \label{C8}
Suppose $S$ is a finite generating set for a group $G$ and $H$ is a subgroup of $G$, then $g\in G$ is in $Comm(H,G)$ iff the Hausdorff distance $D_S(H,gH)<\infty$ iff $D_S(H,gHg^{-1})<\infty$. 

In particular, a subgroup $Q$ of a finitely generated group $G$ is commensurated in $G$ iff the Hausdorff distance $D_S(Q,gQ)$ is finite for all $g\in G$ iff $D_S(Q,gQg^{-1})$ is finite for all $g\in G$.
\end{proposition}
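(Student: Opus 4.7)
The plan is to establish both equivalences by translating ``finite Hausdorff distance'' into ``containment up to multiplication by a finite set'' and thence into ``finite-index intersection.'' Recall $g \in Comm(H,G)$ means $L := H \cap gHg^{-1}$ has finite index in both $H$ and $gHg^{-1}$, and let $B_S(K)$ denote the ball of radius $K$ about the identity in $\Gamma(G,S)$.

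For the forward implication $g \in Comm(H,G) \Rightarrow D_S(H, gH) < \infty$, I fix finite coset decompositions $H = \bigsqcup_{i=1}^n L h_i$ and $gHg^{-1} = \bigsqcup_{j=1}^m L k_j$. Given $h = \ell h_i \in H$ with $\ell = gh'g^{-1} \in L$, the rewriting $h = g h' (g^{-1}h_i)$ yields $d_S(h, gh') \leq |g^{-1}h_i|_S$, which is bounded uniformly in $h$ by $\max_i |g^{-1}h_i|_S$. A symmetric computation, writing $ghg^{-1} \in gHg^{-1}$ as $\ell' k_j$ and unpacking, bounds $d_S(gh, H)$ uniformly in $h \in H$, so $D_S(H, gH) < \infty$. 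For the reverse direction, from $D_S(H, gH) \leq K$ each $h \in H$ admits some $h' \in H$ with $h^{-1}gh' \in B_S(K)$, equivalently $hb \in gH$ for some $b \in B_S(K)^{-1}$. Since $B_S(K)$ is finite, I partition $H = \bigcup_b \{h \in H : hb \in gH\}$; the key observation is that $h_1 b, h_2 b \in gH$ forces $h_1 h_2^{-1} \in H \cap gHg^{-1} = L$, so each nonempty piece is contained in a single left $L$-coset of $H$. This gives $[H : L] \leq |B_S(K)|$. The symmetric Hausdorff bound $d_S(gh, H) \leq K$ yields $[gHg^{-1} : L] < \infty$ by the same mechanism (either directly, or by applying the first bound to $g^{-1}$).

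The second equivalence follows the same template. If $g \in Comm(H, G)$, then $L \subseteq gHg^{-1}$ together with $H = \bigsqcup L h_i$ gives $d_S(h, gHg^{-1}) \leq \max_i |h_i|_S$, and symmetrically with $gHg^{-1} = \bigsqcup L k_j$ bounds $d_S(x, H)$ uniformly for $x \in gHg^{-1}$. Conversely, $D_S(H, gHg^{-1}) \leq K$ gives $H \subseteq gHg^{-1} \cdot B_S(K)$ and $gHg^{-1} \subseteq H \cdot B_S(K)$; the same ``each piece is a single $L$-coset'' argument then bounds $[H : L]$ and $[gHg^{-1} : L]$ by $|B_S(K)|$. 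The only step requiring genuine care is verifying the coset claim: that $\{h \in H : hb \in gH\}$, when non-empty, is a single left coset of $L$ in $H$ — a one-line manipulation that is nevertheless the conceptual bridge between metric and algebraic data. I do not anticipate a substantive obstacle; the result is essentially the clean dictionary one expects between finite Hausdorff distance of cosets and commensurability of subgroups, and the proof is pure bookkeeping once this dictionary is isolated.
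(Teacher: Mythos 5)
The paper does not actually prove Proposition \ref{C8} here --- it is quoted from the companion paper \cite{CM} --- so there is no in-text argument to compare yours against. On its own terms your proof is correct and complete. Both directions of both equivalences check out: the forward implications correctly convert the coset decompositions $H=\bigsqcup L h_i$ and $gHg^{-1}=\bigsqcup L k_j$ (with $L=H\cap gHg^{-1}$) into uniform distance bounds via the rewritings $h = gh'(g^{-1}h_i)$ and $gh = \ell' k_j g$, and the reverse implications correctly use finiteness of the ball $B_S(K)$ (which is where the hypothesis that $S$ is finite enters --- worth saying explicitly) together with the observation that $h_1b, h_2b \in gH$ forces $h_1h_2^{-1}\in L$, so that $[H:L]\le |B_S(K)|$. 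The reduction of $[gHg^{-1}:L]<\infty$ to the first bound applied to $g^{-1}$ is also legitimate, since left translation is an isometry of $\Gamma(G,S)$ and the paper's $D_S$ is symmetric by definition. Two cosmetic points only: the sets $\{h : hb\in gH\}$ land in cosets of the form $Lh_2$, which most people would call \emph{right} cosets of $L$, not left ones (the index count is unaffected); and the paper's $D_S$ is defined for left cosets $g_1H, g_2H$, so in the second equivalence you are tacitly extending it to the Hausdorff distance between the subsets $H$ and $gHg^{-1}$ of the Cayley graph, which is clearly the intended reading. This is exactly the ``finite Hausdorff distance equals containment up to a finite set equals finite-index intersection'' dictionary, and it is the expected argument for this statement.
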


Suppose $G$ is a group with finite generating set $S$ and $H$ is a subgroup of $G$. Let $\Lambda (S,H,G)$ be the graph with vertices the left cosets $gH$ of $G$ and a directed edge (labeled $s$) from $gH$ to $fH$ if for some $s\in S$ and $h_1, h_2\in H$, we have $gh_1sh_2=f$. (Equivalently, in the Cayley graph $\Gamma(S,G)$, there is an edge labeled $s$ with initial point in $gH$ and end point in $fH$.) Basically, $\Lambda$ is a (left) {\it Schreier} coset graph. Note that $\Lambda$ may have several edges labeled $s$ at a vertex. 

The following result appears in \cite{CM}.
\begin{proposition}\label{locfin2} 
Suppose $G$ is a group with finite generating set $S$ and $Q$ is commensurated in $G$. Then the graph $\Lambda(S,Q,G)$ is locally finite and $G$ acts (on the left) transitively on the vertices of $\Lambda$ and  by isometries (using the edge path metric)  on $\Lambda$. For $\Gamma(S,G)$ the Cayley graph of $G$, the projection map $p:\Gamma(S,G)\to \Lambda(S,Q,G)$ respects the action of $G$ and induces a bijection from the filtered ends of $\Gamma(S,G)$ to the ends of $\Lambda(S,Q,G)$. The graph $\Lambda(S,Q,G)$ has 0,1,2 or infinitely many ends. 
\end{proposition}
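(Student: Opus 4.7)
The plan is to verify the five assertions of Proposition~\ref{locfin2} in turn. The first substantive check is local finiteness. At the base vertex $eQ$, outgoing edges labeled $s\in S$ end at cosets of the form $h_1sQ$ with $h_1\in Q$, and two elements $h_1,h_1'\in Q$ yield the same endpoint iff $h_1^{-1}h_1'\in Q\cap sQs^{-1}$. Thus the number of $s$-labeled outgoing edges at $eQ$ equals $[Q:Q\cap sQs^{-1}]$, which is finite precisely because $s\in G=\mathrm{Comm}(Q,G)$. A symmetric count handles incoming edges, and finiteness of $S$ yields finite total degree at $eQ$; the $G$-action gives finite degree at every vertex.

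Transitivity of $G$ on vertices is immediate from $fQ=f\cdot eQ$, and since the defining relation $gh_1sh_2=f$ is stable under left-multiplication by $t\in G$, the $G$-action preserves $s$-labels, hence gives label-preserving graph automorphisms, which are in particular isometries in the edge-path metric. The projection $p:\Gamma(S,G)\to\Lambda(S,Q,G)$ sends $g\mapsto gQ$ and the $\Gamma$-edge from $g$ to $gs$ to the $\Lambda$-edge from $gQ$ to $gsQ$; equivariance $p(tg)=t\cdot p(g)$ is the identity $tgQ=t(gQ)$ in the left-coset set.

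The substantive step is the bijection between filtered ends of $\Gamma(S,G)$ (with respect to $Q$) and ends of $\Lambda$. Filtered ends of $(G,Q)$ can be defined as the ends of the covering space of a presentation $2$-complex for $G$ corresponding to the subgroup $Q$; passing to the $1$-skeleton does not alter the end set, and Proposition~\ref{C8} lets one identify this $1$-skeleton with $\Lambda$ up to a $G$-equivariant quasi-isometry (each coset $gQ$ is collapsed to a point, and the Hausdorff bound $D_S(Q,gQ)<\infty$ is what controls the collapse). Concretely I would argue: any proper ray in $\Lambda$ lifts vertex by vertex to a proper edge path in $\Gamma$ by choosing, at each step, a preimage in the appropriate coset within bounded $S$-distance (bounded step length is possible using local finiteness and transitivity), and conversely every proper ray in $\Gamma$ representing a filtered end of $(G,Q)$ projects to a proper ray in $\Lambda$. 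The equivalence check is then direct: two lifts of the same end of $\Lambda$ are eventually contained in a common $Q$-translate of a finite neighborhood, hence define the same filtered end, and vice versa.

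Finally, the $0/1/2/\infty$ trichotomy for the number of ends of $\Lambda$ is a standard consequence of $\Lambda$ being a connected, locally finite graph admitting a transitive group of automorphisms: the classical Freudenthal--Hopf argument for finitely generated groups adapts verbatim (two essentially disjoint ends can be translated by elements of $G$ to produce arbitrarily many). The principal obstacle I foresee is making the end correspondence in the previous paragraph fully rigorous, since the translation between the CW-complex definition of filtered ends and the $Q$-coset description inherent in $\Lambda$ requires careful bookkeeping of basepoints and of the diameters of the $Q$-neighborhoods used in the filtered-end filtration.
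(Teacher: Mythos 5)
The paper itself offers no proof of Proposition~\ref{locfin2}: it is quoted from \cite{CM} (``The following result appears in \cite{CM}''), so there is no internal argument to compare yours against. Judged on its own terms, your outline follows the natural route and is consistent with the machinery the paper develops later: the count $[Q:Q\cap sQs^{-1}]$ (finite exactly because $G=Comm(Q,G)$) for local finiteness is right, the label-preserving left action and the resulting isometries are immediate as you say, your ``bounded step length'' lifting is precisely the $(Q,F)$-approximate path lifting that the paper proves as Lemma~\ref{approx}, and the $0/1/2/\infty$ trichotomy for a connected, locally finite, vertex-transitive graph is indeed the classical Freudenthal--Hopf argument.

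The one place where your sketch has a genuine gap is the filtered-ends bijection, which you yourself flag as the delicate point. Two things need to be nailed down. First, the definition: the filtered ends of $\Gamma(S,G)$ here are taken with respect to the filtration by the preimages $p^{-1}(C)$ of finite subcomplexes $C$ of $\Lambda(S,Q,G)$, not by compacta of $\Gamma$; until that is fixed, ``every proper ray in $\Gamma$ representing a filtered end projects to a proper ray in $\Lambda$'' is not meaningful (an ordinary proper ray can stay inside a single coset and project to a point). Second, injectivity: your reduction to ``two lifts of the same end of $\Lambda$ are eventually contained in a common $Q$-translate of a finite neighborhood'' is not the right statement. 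What is needed is that for each finite $C\subset\Lambda$, two filtered-proper rays whose projections eventually lie in the same unbounded component of $\Lambda-C$ can be joined in $\Gamma-p^{-1}(C)$; one gets this by approximately lifting a connecting path of $\Lambda-C$ (the lift projects into that path, hence misses $p^{-1}(C)$) and then closing up the loose end inside a single fiber $gQ$. Closing up inside the fiber requires the vertices of $gQ$ to be joined by edges lying over the single vertex $gQ$ of $\Lambda$, i.e.\ by $(S\cap Q)$-edges, which holds only when $S$ contains a generating set of $Q$ (the paper arranges this in its standing setup, but it is not among the hypotheses as you have used them, and $Q$ is not even assumed finitely generated in the statement). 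Making that step work in general, or reducing to the case where it does, is exactly the bookkeeping you anticipated and is the part of the argument that \cite{CM} is being cited for.
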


\section{Semistability Preliminaries}
Much of the groundwork for studying the notion of semistability for a finitely presented group has appeared in \cite{J}, \cite{LR}, and \cite{M1} and is well organized in \cite {G}. We will recall some of the ideas presented in these papers to set the notation for future use.

A continuous function $f:X\to Y$ is {\it proper} if for each compact subset $C$ of $Y$, $f^{-1}(C)$ is compact in $X$. A proper map $r:[0,\infty)\to X$ is called a {\it ray} in $X$.  If $K$ is a locally finite, connected CW-complex, then one can define an equivalence
relation $\sim$ on the set $A$ of all rays in $ K$ by setting $r \sim s$ if and only if
for each compact set $C \subset K$, there exists an integer $N(C)$ such that $r([N(C),\infty))$ and
$s([N(C), \infty))$ are contained in the same unbounded path component of $K -C$ (a path
component of $K-C$ is {\it unbounded} if it is not contained in any compact subset of $K$). An equivalence class of $A/\sim$ is called {\it an end of} $K$, the set of equivalence classes of $A/\sim$ is called {\it the set of ends of} $K$ and two rays in $K$, in the same equivalence class, are said to {\it converge to the same end}.
The cardinality of $A/\sim$, denoted by $e(K)$, is the {\it number of ends of} $K$.

If G is a finitely generated group with generating set $S$, then denote the {\it Cayley graph of $G$ with respect to $S$},  by $\Gamma(G,S)$. We define the {\it number of ends of $G$}, denoted by $e(G)$, to be the number of ends of the Cayley graph of $G$ with respect to a finite generating set
(i.e., $e(G) = e(\Gamma(G,S)$). This definition is independent of the choice of finite generating set for $G$.
If $G$ is finitely generated, then $e(G)$ is either 0, 1, 2, or is infinite (in which case it has the
cardinality of the real numbers). We let $\ast$ denote the basepoint of $\Gamma(G,S)$, which corresponds to the identity of $G$.

If $f, g $ are rays in $K$, then one says that $f$ and $g$ are {\it properly homotopic} if there is a proper map
$H : [0,1] \times [0,\infty) \to K$ such that
$H\vert_{\{0\}\times[0,\infty)} = f$ and $H\vert_{\{1\}\times[0,\infty)} = g$. If $f(0)=g(0)=v$, one says $f$ and $g$ are {\it properly homotopic relative to $v$} (or $rel\{v\}$) if additionally, $H\vert _{[0,1]\times \{0\}}=v$.

\begin{definition} A locally finite, connected CW-complex $K$ is {\it semistable at $\infty$} if any two rays in $K$ converging to the same end are properly homotopic. 
\end{definition}

In  \cite{M1} (Theorem 2.1) and  \cite{M2} (lemma 9) M. Mihalik proves several notions are equivalent to semistability. In \cite{M1} the space considered is simply connected, but simple connectivity is not important in that argument. Mihalik's proofs give the following result.

\begin{theorem}\label{ssequiv}
Suppose $K$ is a locally finite, connected and 1-ended CW-complex. Then the following are equivalent 
\begin{enumerate}
\item $K$ is semistable at $\infty$.
\item For any ray $r:[0,\infty )\to K$ and compact set $C$, there is a compact set $D$ such that for any third compact set $E$, and loop $\alpha$ based on $r$ and with image in $K-D$, $\alpha$ is homotopic $rel\{r\}$ to a loop in $K-E$.
\item For any compact set $C$ there is a compact set $D$ such that if $r$ and $s$ are rays based at $v$ and with image in $K-D$, then $r$ and $s$ are properly homotopic $rel\{v\}$, by a proper homotopy in $K-C$. 
\end{enumerate}
If $K$ is simply connected then a fourth equivalent condition can be added to this list:

4. If $r$ and $s$ are rays based at $v$, then $r$ and $s$ are properly homotopic 

$\ \ \ $$rel\{v\}$. 
\end{theorem}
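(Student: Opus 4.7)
The plan is to establish the cycle $(1) \Rightarrow (2) \Rightarrow (3) \Rightarrow (1)$, which will give the equivalence of the first three conditions, and then separately handle condition (4) under the simple connectivity hypothesis. Throughout, one-endedness of $K$ will be used in a crucial way to ensure that rays and their tails lie in a common unbounded component of the complement of any compact set.

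For $(1) \Rightarrow (2)$, given $r$ and $C$ I would pick $D \supset C$ suitably large, and for a loop $\alpha$ based at a point $r(t_0)$ with image in $K-D$ I would form the ``inserted'' ray $r'$ consisting of $r|_{[0,t_0]}$ followed by $\alpha$ followed by $r|_{[t_0,\infty)}$. Semistability furnishes a proper homotopy $H$ from $r$ to $r'$, and the tracks of $H$ on the subinterval carrying $\alpha$ give a homotopy of $\alpha$ rel $\{r\}$ into arbitrarily deep complements of compacta $E$; properness of $H$ together with a large enough choice of $D$ keeps the homotopy out of $C$. For $(2) \Rightarrow (3)$ I would fix a proper reference ray $r_0$ with $r_0(0)=v$, apply (2) to $r_0$ and $C$ to obtain $D$, and then for any two rays $r,s$ from $v$ in $K-D$ invoke one-endedness of $K$ to connect their tails by a path deep in the complement of a large compactum; the concatenation of $r$ with this bridging path and the reverse of $s$ becomes a loop based on $r_0$ (after sliding the base along $r_0$) to which (2) applies, and pushing this loop out produces the proper homotopy rel $\{v\}$ from $r$ to $s$ inside $K-C$.

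For $(3) \Rightarrow (1)$ I would reduce to the case of rays based at a common point by prepending paths (properness is preserved), then fix an exhausting sequence $C_1 \subset C_2 \subset \cdots$ with corresponding $D_1 \subset D_2 \subset \cdots$ coming from (3). Using one-endedness, the tails of the two rays $f,g$ eventually lie in a single unbounded component of $K-D_n$ for each $n$, so on successive ``annular'' subintervals of $[0,\infty)$ I can apply (3) to homotope matching tail pieces within $K-C_n$, and then telescope these homotopies into a single map $H:[0,1]\times[0,\infty)\to K$, whose properness is guaranteed because the $n$th piece of $H$ misses $C_n$. The condition (4) in the simply connected case follows by the same kind of reduction: any two rays based at $v$ converge to the same end (by one-endedness), so semistability gives a proper homotopy between them, which can be converted to one rel $\{v\}$ using simple connectivity to kill the initial boundary loop; the reverse implication is immediate since rays converging to a common end can be prepended with a path to $v$.

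I expect the main obstacle to be the assembly step in $(3) \Rightarrow (1)$: stitching countably many homotopies defined on adjacent subintervals into a single proper homotopy requires carefully nested choices of the $D_n$ together with reparametrization of each piece so that (i) adjacent pieces agree on their common boundary intermediate ray, (ii) each piece stays out of the corresponding $C_n$, and (iii) the resulting assembly map is proper. The technicalities in $(2) \Rightarrow (3)$ --- in particular, transferring the loop based on $r_0$ from (2) back into a homotopy based at $v$ between the original rays $r$ and $s$ without losing control of the compact set $C$ --- will also require some bookkeeping involving bridging arcs between $r_0$ and the base $v$.
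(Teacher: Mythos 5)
First, a framing remark: the paper does not prove this theorem itself --- it is quoted from Mihalik's earlier work (\cite{M1}, Theorem 2.1 and \cite{M2}, Lemma 9) --- so the comparison below is with those standard arguments rather than with an in-text proof.

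Your step $(1)\Rightarrow(2)$ has a genuine gap. In condition (2) the set $C$ can only play a role if the homotopy pushing $\alpha$ out to $K-E$ is required to have image in $K-C$ (otherwise $C$ and $D$ are irrelevant and (2) would hold for every simply connected complex, which is false); your argument implicitly aims for this (``keeps the homotopy out of $C$'') but cannot deliver it. The proper homotopy $H$ from $r$ to the inserted ray $r'$ is produced \emph{after} $\alpha$ is given, so the compact set $H^{-1}(C)\subset[0,1]\times[0,\infty)$ depends on $\alpha$ and on the particular $H$; there is no way to choose $D$ in advance, depending only on $r$ and $C$, which guarantees that the portion of $H$ carrying $\alpha$ lies beyond $H^{-1}(C)$. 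Properness controls $H$ only eventually, with a threshold you cannot see when choosing $D$. The standard repair is the contrapositive: if (2) fails for $r$ and $C$, then for every $n$ there is a loop $\alpha_n$ based at $r(t_n)$ with $t_n\to\infty$, lying in $K-D_n$, that cannot be pushed past $E_n$ rel $r$ inside $K-C$; inserting \emph{all} of these loops into $r$ yields a single ray $r'$ converging to the same end as $r$, and one application of properness to a single proper homotopy $r\simeq r'$ would push all but finitely many $\alpha_n$ out within $K-C$, a contradiction. The single-loop insertion cannot substitute for this.

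A related, though less clear-cut, weakness is in $(2)\Rightarrow(3)$: ``pushing this loop out produces the proper homotopy rel $\{v\}$'' conceals the main construction. To build $H$ rel $\{v\}$ one must choose the vertical slices $\mu_n=H\vert_{[0,1]\times\{n\}}$ (paths from $r(n)$ to $s(n)$) tending to infinity so that each square with boundary $\mu_n^{-1}$, $r\vert_{[n,n+1]}$, $\mu_{n+1}$, $s\vert_{[n,n+1]}^{-1}$ bounds in $K-C_n$; the path-homotopy class of $\mu_n$ is forced by the earlier choices, and condition (2) must be invoked at every stage to replace the forced representative by one lying far out without leaving the complement of $C$. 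This inductive correction of the rungs of the ladder is the heart of the cited proofs and is more than bookkeeping. Your $(3)\Rightarrow(1)$ assembly and your treatment of condition (4) via killing the initial boundary loop by simple connectivity are fine and standard.
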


\begin{example}
Note that the 1-ended CW-complex obtained by attaching a loop at $0$ to the interval $[0,\infty)$ is  semistable at $\infty$. Consider a ray $r$ which maps $[0,\infty)$ homeomorphically to $[0,\infty) $ and a ray $s$ which maps $[0,1]$ once around the loop and then maps  $[1,\infty)$ homeomorphically to $[0,\infty)$. Clearly $r$ and $s$ are properly homotopic, but not by a proper homotopy $rel\{0\}$. 
\end{example}
 
The following fact is proved by B. Jackson in   \cite{J}
\begin{theorem}
Suppose $X$ and $Y$ are locally finite, connected CW-complexes with $\pi_1(X)=\pi_1(Y)$. Then  the universal cover of $X$ is semistable at $\infty$ iff the universal cover of $Y$ is semistable at $\infty$.
\end{theorem}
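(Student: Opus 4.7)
The plan is to transport semistability across a pair of cellular maps realizing the $\pi_1$-identification. Suppose $\tilde X$ is semistable at $\infty$; by symmetry it suffices to prove $\tilde Y$ is too. I first reduce to the case that $X$ and $Y$ are 2-dimensional: semistability at $\infty$ of the universal cover of a locally finite CW-complex depends only on its 2-skeleton, since cellular approximation pushes any proper ray and any proper homotopy of rays into the 2-skeleton, and ends are unchanged by attaching cells of dimension $\geq 3$ (each such cell has compact image). Fix an isomorphism $\pi_1(X)\cong\pi_1(Y)=G$ and construct cellular maps $f:X\to Y$ and $g:Y\to X$ realizing this isomorphism and its inverse by mapping chosen base 0-cells across, mapping each 1-cell to a representative loop, and extending across 2-cells (whose boundary images are null-homotopic by construction).

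Next lift $f$ and $g$ to $G$-equivariant cellular maps $\tilde f:\tilde X\to\tilde Y$ and $\tilde g:\tilde Y\to\tilde X$ via chosen basepoint lifts; these lifts are proper because local finiteness together with equivariance ensures preimages of compact sets are covered by finitely many cell orbits meeting a neighborhood of that compact set. Given proper rays $r,s$ in $\tilde Y$ converging to the same end (with a common basepoint, after a preliminary homotopy), push them to $\tilde X$ by $\tilde g$; these push-forwards still converge to the same end by properness. Invoke the semistability of $\tilde X$ to obtain a proper homotopy between $\tilde g\circ r$ and $\tilde g\circ s$; apply $\tilde f$ to obtain a proper homotopy between $\tilde f\tilde g\circ r$ and $\tilde f\tilde g\circ s$ in $\tilde Y$. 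To finish, concatenate with a proper homotopy between $\tilde f\tilde g$ and $\mathrm{id}_{\tilde Y}$: the two $G$-equivariant maps agree on every 0-cell (by choice of basepoint lifts and equivariance), and on each 1-cell orbit representative their images bound a null-homotopic loop in the simply connected $\tilde Y$. Choose a null-homotopy per orbit and propagate by the $G$-action to build an equivariant filling on the 1-skeleton; extend across the 2-skeleton by an equivariant cellular filling. Equivariant tracking then produces the required proper homotopy $r\simeq s$.

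\textbf{Main obstacle.} The crux is showing that $\tilde f\tilde g$ is \emph{properly} homotopic to $\mathrm{id}_{\tilde Y}$. These maps need not be equivariantly homotopic in the strong sense because of genuine obstructions in $\pi_2(\tilde Y)$, but proper homotopy depends only on behavior at infinity, and the task is to leverage equivariance and local finiteness to produce uniform control of the orbitwise fillings out to infinity. This is especially delicate when $Y$ has infinitely many $G$-orbits of cells (which can occur when the quotient is noncompact), where one must arrange the orbit-by-orbit choices so that preimages of compact sets in $\tilde Y$ remain compact; this step is the principal technical hurdle in the argument.
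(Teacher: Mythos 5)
The paper offers no proof of this statement (it is quoted from Jackson \cite{J}, where it is proved for \emph{finite} complexes), so your proposal must stand on its own, and it has a genuine gap exactly where you flag the ``main obstacle'': the assertion that $\tilde f\tilde g$ is properly homotopic to $\mathrm{id}_{\tilde Y}$ is false in general, and no amount of uniform control at infinity will produce such a homotopy. The maps $\tilde f\tilde g$ and $\mathrm{id}_{\tilde Y}$ need not even be non-properly homotopic: take $X=S^1$ and $Y$ the presentation complex of $\langle a,b \mid b,\, b\rangle$; then $\tilde X=\mathbb R$ is contractible while $\tilde Y$ is a line with a $2$-sphere wedged on at each integer, so $H_2(\tilde Y)\neq 0$ and any composite $\tilde Y\to\tilde X\to\tilde Y$ kills $H_2$. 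Nor is the obstruction concentrated in a compact set, since the homology at infinity differs as well. The standard repair --- and the actual content of Jackson's argument --- is to give up on a proper homotopy equivalence and settle for a proper $1$-equivalence: one only needs that $\tilde f\tilde g$ restricted to the $1$-skeleton $\tilde Y^{(1)}$ is properly homotopic, \emph{as a map into} $\tilde Y$, to the inclusion. Downstairs, $fg|_{Y^{(1)}}$ and the inclusion $Y^{(1)}\hookrightarrow Y$ agree at the basepoint and induce the same homomorphism on $\pi_1$, hence are homotopic (no $\pi_2$-obstruction arises when the source is $1$-dimensional); lifting this compactly supported homotopy equivariantly gives a proper homotopy upstairs. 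Since the rays and the homotopies in the definition of semistability may be taken to be edge-path rays in the $1$-skeleton, this weaker statement suffices to run your transport $r\simeq \tilde f\tilde g r\simeq \tilde f\tilde g s\simeq s$.

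Second, the finiteness issue you defer to the last sentence is not a technical hurdle but a hypothesis you cannot do without. For merely locally finite complexes the statement is false as written: take $X$ a point and $Y$ a contractible, locally finite $2$-complex that is not semistable at infinity (such complexes exist; see \cite{Ge}). Both have trivial fundamental group and are their own universal covers. Correspondingly, the properness claims in your outline break down: a $\pi_1$-isomorphism need not be realized by any proper map at all between noncompact complexes, equivariance of the lifts forces properness only when the $G$-action is cocompact, and the orbit-by-orbit fillings cannot in general be arranged to have compact point preimages. The theorem should be read, as in \cite{J}, with $X$ and $Y$ finite; with that hypothesis, your outline amended to work only through dimension $1$ as above becomes the standard proof.
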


\begin{definition} 
If $G$ is a 1-ended, finitely presented group, and $X$ is some (equivalently any) finite two dimensional CW-complex with fundamental group $G$, then we say $G$ {\it  is semistable at $\infty$ if the universal cover of $X$ is semistable at $\infty$}.
\end{definition}

We now define the notion of semistabilty for a finitely generated group as in  \cite{M4} .We give
the definition for 1-ended groups since this is what we are interested in. Suppose $G$ is a
1-ended finitely generated group with generating set $S\equiv \{g_1, g_2,\ldots , g_n\}$ and let $\Gamma
(G,S)$ be the Cayley graph of $G$ with respect to this generating set.  Suppose $\{\alpha_1, \alpha_2,\ldots , \alpha_m\}$ is a finite set of relations in $G$ written in the letters $\{g_1^\pm, g_2^\pm,\ldots , g_n^\pm\}$.  For any vertex $v\in \Gamma(G,S)$, there is an edge path cycle labeled $\alpha_i$ at $v$.   The two dimensional CW-complex $\Gamma_{(G,S)}(\alpha_1,\ldots , \alpha_m)$ is obtained by attaching to each vertex of $\Gamma(G,S)$, $2$-cells corresponding to the relations $\alpha_1,\ldots ,\alpha_n$.

In  \cite{M4} it is shown that if $S$ and $T$ are finite generating sets for the group $G$, and there are finitely many $S$-relations $P$ such that $\Gamma_{(G,S)}(P)$ is semistable at $\infty$ then there are finitely many $T$-relations $Q$ such that $\Gamma_{(G,T)}(Q)$ is semistable at $\infty$. Hence the following definition:

\begin{definition} 
We say {\it $G$ is semistable at $\infty$} if  for some finite generating set $S$ for $G$ and finite set of $S$-relations $P$, the complex $\Gamma_{(G,S)}(P)$ is semistable at $\infty$. 
\end{definition}
      
Note that if $G$ has finite presentation $\langle S:P\rangle$, then $G$ is semistable at $\infty$ with respect to definition 2 iff $G$ is semistable at $\infty$ with respect to definition 3 iff $\Gamma_{(G,S)}(P)$ is semistable at $\infty$.

Lemma 2 of  \cite{M4} is as follows:

\begin{lemma}\label{rel}
Suppose the finitely generated group $G$ is 1-ended and semistable at $\infty$. If $S$ is a finite generating set for $G$ and $P$ is a finite set of $S$-relations in $G$ such that $\Gamma _{(G,S)}(P)$ is semistable at $\infty$, then there is a finite set $Q$ of $S$ relations such that: if $r$ and $s$ are rays 
in $\Gamma_{(G,S)}(P\cup Q)$, with $r(0)=s(0)$ then $r$ is properly homotopic to $s$ $rel\{r(0)\}$. 
\end{lemma}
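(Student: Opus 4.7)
The plan is to extract a compact set from Theorem~\ref{ssequiv}(3) and use it to write down an explicit finite set $Q$ of $S$-relators. Applying Theorem~\ref{ssequiv}(3) to $K := \Gamma_{(G,S)}(P)$ with compact set $C = \{\ast\}$ produces a compact set $D_0$ such that any two rays based at a common vertex $v \in K - D_0$, with image in $K - D_0$, are properly homotopic rel $v$ by a homotopy in $K - \{\ast\}$. After enlarging, I may assume $D = B(\ast, N)$ is a ball and that $K - D$ is path-connected, using that $K$ is 1-ended. Let $D'$ be the finite subcomplex formed from $D$ together with all edges joining $D$ to its external vertex neighbors $\partial^{+}D$.

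Next I construct $Q$. Fix a spanning tree of $D'$ rooted at $\ast$, producing canonical edge-paths $\alpha_v$ from $\ast$ to every vertex $v$ of $D'$. For each ordered pair $p, q \in \partial^{+}D$ fix an edge-path $\gamma_{p,q}$ in $K - D$ from $p$ to $q$. Let $Q_0$ be a finite generating set for $\pi_1(D', \ast)$ written as $S$-words, and let $Q_1 = \{\alpha_p\gamma_{p,q}\bar\alpha_q : p, q \in \partial^{+}D\}$. Set $Q = Q_0 \cup Q_1$; both are finite, and their words represent trivial elements of $G$, so they are valid $S$-relators.

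For the verification, set $K' = \Gamma_{(G,S)}(P \cup Q)$. The cellular $G$-action on $K'$ (which extends from the Cayley graph, since each relator contributes a $2$-cell at every vertex) reduces the task to showing that any two edge-path rays $r, s$ based at $\ast$ in $K'$ are properly homotopic rel $\ast$ (arbitrary rays at non-vertices are handled by an initial small homotopy). Let $T_r = \sup\{t : r(t) \in D\}$ and similarly $T_s$; these are finite by properness, and $p := r(T_r)$, $q := s(T_s)$ lie in $\partial^{+}D$. The tails $r|_{[T_r,\infty)}$ and $\gamma_{p,q}\cdot s|_{[T_s,\infty)}$ are rays at $p$ with image in $K - D$, so Theorem~\ref{ssequiv}(3) provides a proper homotopy between them rel $p$ in $K'$. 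Concatenating with the constant homotopy on $r|_{[0,T_r]}$ yields a proper homotopy rel $\ast$ from $r$ to $r|_{[0,T_r]}\cdot\gamma_{p,q}\cdot s|_{[T_s,\infty)}$ in $K'$. It then remains to show that the initial path $r|_{[0,T_r]}\cdot\gamma_{p,q}$ is homotopic rel endpoints to $s|_{[0,T_s]}$ in $K'$.

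The main obstacle is this last step, because the initial segments $r|_{[0,T_r]}$ and $s|_{[0,T_s]}$ may oscillate into and out of $D$ many times before settling, accruing ``detour'' subpaths in $K - D$ with endpoints in $\partial^{+}D$. The strategy is to decompose each initial segment into alternating in-$D$ subpaths and detour subpaths: the $Q_0$-relators then reduce each in-$D$ piece to a tree path, and the $Q_1$-relators allow each bridge $\alpha_p\gamma_{p,q}\bar\alpha_q$ to be replaced by a trivial loop. Each remaining detour loop $\tau\cdot\bar\gamma_{p,q}$ lies in $K - D$ based in $\partial^{+}D$ and must be ``absorbed at infinity'' by enlarging the tail homotopy of the previous paragraph through repeated applications of Theorem~\ref{ssequiv}(3) to rays based at the various detour endpoints, so the detour loops act trivially on a common tail ray going out along $s$. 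Organizing these absorptions so that no uncovered element of $\pi_1(K,\ast)$ survives the passage to $K'$ is the technical heart of the proof.
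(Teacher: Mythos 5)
The paper itself does not prove this lemma --- it quotes it from \cite{M4} (and Remark 1 even asserts the relators $Q$ are superfluous) --- so your attempt has to be judged on its own terms. Your setup is sound: extracting $D$ from Theorem \ref{ssequiv}(3), splitting each ray into an initial segment and a tail in $K-D$, and matching the tails rel their common basepoint all work, and your $Q=Q_0\cup Q_1$ is in fact an adequate choice. But there are two genuine problems in what follows. First, the sentence ``it then remains to show that $r|_{[0,T_r]}\cdot\gamma_{p,q}$ is homotopic rel endpoints to $s|_{[0,T_s]}$ in $K'$'' asks for something that is generally impossible: that homotopy exists if and only if the loop $\lambda=r|_{[0,T_r]}\,\gamma_{p,q}\,\overline{s|_{[0,T_s]}}$ dies in $\pi_1(K',\ast)$, and when $G$ is not finitely presented (the case the lemma is actually needed for) no finite $Q$ can kill all such loops, since $\pi_1(\Gamma_{(G,S)}(P\cup Q))=\ker(F(S)\to G)/\langle\langle P\cup Q\rangle\rangle$ remains nontrivial. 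The only statement that is both true and sufficient is the weaker one: $\lambda\cdot s\simeq s$ rel $\ast$ by a \emph{proper} homotopy. Second, you recognize this in your last paragraph but leave the absorption argument as an unexecuted ``strategy,'' and the version you describe --- absorbing the detour loops into ``a common tail ray going out along $s$'' --- is circular: transporting a lasso anchored at $p$ onto the tail of $s$ forces you to compare $\bar\alpha_p$ with the arbitrary initial segment $s|_{[0,T_s]}$, which reintroduces an uncontrolled loop through $D$, i.e.\ exactly the problem you started with.

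Both issues are fixable, and the fix is short. Fix one reference ray $t_\ast=\alpha_{p_0}\cdot t_{p_0}$ with $t_{p_0}$ a ray at $p_0\in\partial^{+}D$ lying in $K-D$, and prove instead that \emph{every} ray $s$ at $\ast$ is properly homotopic rel $\ast$ to $t_\ast$ in $K'$. Matching tails gives $s\simeq\lambda\cdot t_\ast$ rel $\ast$ for some loop $\lambda$ at $\ast$. Decompose $\lambda$ rel $\ast$ (by inserting $\bar\alpha_v\alpha_v$ at the vertices where it crosses $\partial^{+}D$) into finitely many loops in $D'$ --- killed by the $Q_0$-cells at $\ast$ --- and finitely many lassos $\alpha_p\mu\bar\alpha_p$ with $\mu$ a loop at $p\in\partial^{+}D$ contained in $K-D$. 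Absorbing such a lasso is then a one-line application of Theorem \ref{ssequiv}(3): the rays $\mu\cdot(\gamma_{p,p_0}t_{p_0})$ and $\gamma_{p,p_0}t_{p_0}$ are both based at $p$ with image in $K-D$, hence properly homotopic rel $p$; and the $Q_1$-cell for $\alpha_p\gamma_{p,p_0}\bar\alpha_{p_0}$ identifies $\alpha_p\gamma_{p,p_0}t_{p_0}$ with $t_\ast$ rel $\ast$ in $K'$. Thus $(\alpha_p\mu\bar\alpha_p)\cdot t_\ast\simeq t_\ast$ rel $\ast$, and finitely many successive absorptions concatenate to a proper homotopy $\lambda\cdot t_\ast\simeq t_\ast$. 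With that paragraph supplied (and the minor bookkeeping of placing the tail's basepoint in $\partial^{+}D$ rather than in $D$), your choice of $Q$ does prove the lemma.
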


\begin{remark}
Using the third equivalent notion of semistability in theorem \ref{ssequiv}, it can be shown that in fact the set of relations $Q$ in the previous lemma are unnecessary in order to draw the same conclusion. I.e. If $\Gamma_{(G,S)}(P)$ is semistable at $\infty$, and $r$ and $s$ are rays 
in $\Gamma_{(G,S)}(P)$, with $r(0)=s(0)$ then $r$ is properly homotopic to $s$ $rel\{§r(0)\}$. 
\end{remark}
By an {\it edge path ray} in K, we mean a proper map $r : [0, \infty) \to K$ such that for each positive integer $n, r\vert _{[n-1,n]}$ is a homeomorphism to an edge of $K$.

If $G$ is finitely generated with finite generating set $S$, then any edge path ray, $r :  ([0,\infty),\{0\}) \to (\Gamma(G,S),*)$, can be represented as $(e_1, e_2,\ldots )$ at $*$ with $e_i \in S^\pm$, and $e_i$ the label of the $i^{th}$ edge of $r$.  Any edge path $(e_1, e_2,\ldots  ,e_k)$ of $\Gamma(G,S)$ corresponds to some group element $e_1'e_2'\ldots  e_k'$ where $e_i' \in S^\pm$.  But determining an edge path in $\Gamma(G,S)$ from some word $e_1'e_2'\ldots  e_k'$ requires a specified basepoint, since the path $(e_1', e_2',\ldots ,e_k')$ at a vertex $v$ determines a different edge path than $(e_1',e_2',\ldots  ,e_k')$ based at another vertex $w$.  Note that these edge paths differ by a covering transformation taking $v$ to $w$. By the \textsl{Star} of a subcomplex $A$ contained in a locally finite, connected CW-complex $K$, denoted \textsl{St}($A$), we mean the subcomplex of $K$ consisting of the union of all 1-cells of $K$ that intersect $A$ along with any  $n$-cell all of whose vertices lie in $St(A)$.  Note then that $A \subseteq \textsl{St}(A)$ and if $A$ is a finite subcomplex, then \textsl{St}($A$) is a finite subcomplex by the local finiteness of $K$. We recursively define the \textsl{Nth Star of $A$} for $N = 1,2,3,\ldots $ by $St^N(A) = St(St^{N-1}(A))$ where $St^0(A) = A$. When it is not clear what the over-complex might be we use the notation $St(A,K)$ to denote the {\it Star of $A$ in $K$}.

Since any ray $r : [0, \infty) \to K$ is properly homotopic to an edge path ray, we may concentrate on edge path rays when dealing with the semistability of a complex.
 
If $e$ is an edge in $K$ and $(e_1, e_2, e_3,\ldots  )$ is an edge path in $K$ based at the terminal point of $e$, then one denotes by $e * (e_1, e_2, e_3,\ldots )$ the edge path given by $e$ followed by $(e_1, e_2, e_3,\ldots )$.

\begin{definition}
For a group $G$ with finite generating set $S$ and a subset $T$ of $S$, we say an edge path in $\Gamma(G, S)$ is a {\it $T$-path} if each edge of the path is labeled by an element of  $T^{\pm}$. If the path is infinite and proper we call it a {\it  $T$-ray}.
\end{definition}

\section{Proof of Semistability in the Main Theorem}   

The 1-ended part of our main theorem is straightforward:

\begin{proposition}
Suppose $Q$ is an infinite finitely generated commensurated subgroup of infinite index in a finitely generated group $G$. Then $G$ is 1-ended.
\end{proposition}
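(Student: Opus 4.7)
The plan is to rule out $e(G) \in \{2, \infty\}$, noting that $G$ is automatically infinite because it contains the infinite subgroup $Q$. The two-ended case disposes of itself quickly: a two-ended finitely generated group is virtually infinite cyclic, so every infinite subgroup has finite index, contradicting $[G:Q]=\infty$. So the substantive task is to exclude $e(G)=\infty$.

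To handle that, I would invoke Stallings' Ends Theorem to obtain a nontrivial splitting of $G$ over a finite subgroup, and then work with the associated Bass--Serre tree $T$. On $T$, the group $G$ acts minimally and with finite edge stabilizers; since $e(G)\neq 2$, the tree $T$ is not a single line. The plan is then to analyze the induced action of $Q$ on $T$ in two subcases, using commensurability to force $Q\cap gQg^{-1}$ to lie in an edge stabilizer (hence be finite) while the hypothesis forces it to be infinite. Suppose first that $Q$ fixes a vertex $v$; by minimality of the $G$-action and the fact that $T$ is not a point, some $g\in G$ moves $v$. Then $gQg^{-1}$ fixes $gv\neq v$, and $Q\cap gQg^{-1}$ fixes every edge along the geodesic $[v,gv]$. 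Since edge stabilizers are finite, $Q\cap gQg^{-1}$ is finite; but the commensurated hypothesis (together with Proposition \ref{C8}) forces $Q\cap gQg^{-1}$ to have finite index in the infinite group $Q$, a contradiction.

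If instead $Q$ fixes no vertex, then $Q$ contains a hyperbolic element $q$ with axis $L\subset T$, and for each $g\in G$ commensurability produces an $n\ge 1$ with $q^n\in Q\cap gQg^{-1}$. Choosing $g$ so that $gL\neq L$ (possible because $T$ is not a line and the action is minimal, so otherwise $L$ would be $G$-invariant and equal $T$) and carefully comparing the configuration of the two axes $L$ and $gL$, one again forces a finite-index subgroup of $Q\cap gQg^{-1}$ into a finite edge stabilizer, contradicting commensurability.

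The main obstacle is precisely this last subcase: one must case-split on whether $L$ and $gL$ are disjoint, meet in a bounded segment, share a single end, or coincide, and rule each possibility out by exploiting commensurability (in particular the fact that $q^n$ lies in $Q\cap gQg^{-1}$ for \emph{every} $g$, which pins the axis $L$ into every translate of the minimal $Q$-invariant subtree). The fixed-vertex case is short and transparent; the no-fixed-vertex case carries essentially all of the technical weight, because hyperbolic elements have more geometric freedom than fixed points do and commensurability must be leveraged more delicately to constrain them.
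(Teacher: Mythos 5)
Your reduction is sound as far as it goes: the two-ended case is correctly dispatched (an infinite subgroup of a virtually cyclic group has finite index), and the elliptic subcase of the Stallings argument is complete and correct, since $Q\cap gQg^{-1}$ fixes every edge of $[v,gv]$ and is therefore finite, contradicting the fact that it has finite index in the infinite group $Q$. The genuine gap is in the hyperbolic subcase, which you yourself identify as carrying all the technical weight: the contradiction you propose to aim for there is unattainable. You want to force a finite-index subgroup of $Q\cap gQg^{-1}$ into a finite edge stabilizer, but by your own observation $q^{n}\in Q\cap gQg^{-1}$ for some $n\geq 1$, and $q^{n}$ is hyperbolic; hence every finite-index subgroup of $Q\cap gQg^{-1}$ contains a further power of $q$ and so is not elliptic. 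No case analysis on the relative position of $L$ and $gL$ can produce an elliptic finite-index subgroup. The subcase can be rescued by a different endgame: let $T_{Q}$ be the minimal $Q$-invariant subtree (it exists and $T_{Q}/Q$ is finite because $Q$ is finitely generated and contains a hyperbolic element). A finite-index subgroup containing a hyperbolic element has the same minimal subtree, so $T_{Q}=T_{Q\cap gQg^{-1}}=T_{gQg^{-1}}=gT_{Q}$ for every $g$; thus $T_{Q}$ is $G$-invariant and equals $T$ by minimality, so $Q$ acts cocompactly on $T$. Since edge stabilizers are finite, counting double cosets gives $[G:Q]\leq |G_{e}|\cdot |Q\backslash G/G_{e}|<\infty$, the desired contradiction.

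Even with that repair, be aware that your route is entirely different from the paper's, which never invokes Stallings' theorem or Bass--Serre theory. The paper works directly in the Cayley graph: given a compact $C$, it encloses in a larger compact $D$ the bounded components of the finitely many cosets $g_{i}\Gamma(Q)$ meeting $C$, and then connects any vertex outside $D$ to a fixed reference coset $g\Gamma(Q)$ disjoint from $C$ by first travelling within its own coset (possible because $Q$ is infinite) and then hopping across using the finite Hausdorff distance of Proposition \ref{C8}. That argument is elementary and self-contained, and --- more to the point for this paper --- it is the template for the semistability proof that follows, where the same ``travel along a coset, then transfer'' mechanism is upgraded from paths to proper homotopies.
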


\begin{proof}
Let $S$ be a finite generating set for $G$, containing a generating set for $Q$. Let $\Gamma\equiv \Gamma(G,S)$ and $\Gamma(Q)\equiv \Gamma(Q,S\cap Q)$. Suppose $C$ is a finite subcomplex of $\Gamma$. Only finitely many translates $g_1\Gamma(Q),\ldots , g_n\Gamma(Q)$ intersect $C$ non-trivially. Choose $D$ a finite subcomplex of $\Gamma$ such that $C\subset D$ and for each $i$, $D$ contains the bounded components of $g_i\Gamma(Q)-C$. Choose $g\in G$ such that $g\Gamma(Q)\cap C=\emptyset$. It suffices to show that for any vertex $v$ of $\Gamma-D$ there is an edge path in $\Gamma-C$ connecting $v$ to $g\Gamma(Q)$. Say $v\in h\Gamma(Q)$ and the Hausdorff distance from $hQ$ to $gQ$ in $\Gamma$ is $K$. Note that the vertices of $k\Gamma(Q)$ are $kQ$. By the choice of $D$, there is an edge path $\alpha$ in $h\Gamma (Q)-C$ from $v$ to $w\in h\Gamma(Q)-St^k(C)$. Choose a path $\beta$ of length $\leq K$ from $w$ to $g\Gamma(Q)$. Then $(\alpha, \beta)$ is a path from $v$ to $g\Gamma(Q)$ avoiding $C$. 
\end{proof}

For the remainder of the proof, $\mathcal{Q} = \{q_1, q_2,\ldots  ,q_n\}$ is a finite generating set for $Q$ and $S\equiv \{q_1, q_2,\ldots  ,q_n, k_1, k_2,\ldots ,k_t\}$ is a generating set for $G$ where $k_i\not \in Q$. Let $\mathcal K=\{k_1,\ldots ,k_t\}$. 
Our hypothesis states that for each $g\in G$, the Hausdorff distance between $Q$ and $gQ$ is finite in $\Gamma(G,S)$. 

Consider the left (Scherier) coset graph $\Lambda (S,Q,G)$ with vertex set, the set of all cosets $gQ$ in $G$. A directed edge labeled $s$ will have initial vertex $g_1Q$ and terminal vertex $g_2Q$ if there is an edge labeled $s$ in $\Gamma(G,S)$ beginning in $g_1Q$ and ending in $g_2Q$. By proposition \ref{locfin2}, $\Lambda(S,Q,G)$ is locally finite. There is a quotient map $\rho:\Gamma(G,S)\to \Lambda(S,Q,G)$ respecting the left action of $G$ on these graphs, such that each edge labeled by an element of $Q$ is mapped to a point.

\begin{lemma}\label{approx}
Suppose $S$ is a finite generating set for the group $G$ and $Q$ is a finitely generated commensurated subgroup of $G$ (with generating set a subset of $S$). There is an integer $F$ such that if $gQ$ and $hQ$ are distinct cosets (vertices) of $\Lambda(S,Q,G)$ connected by an edge labeled $s\in S^{\pm1}$, then for each $v\in gQ\subset \Gamma(S,G)$ there is a $Q$-path $\alpha$ at $v$ in $\Gamma(S,G)$ of length $< F$ such that the path $(\alpha, s)$ ends in $hQ$. 

In particular: Suppose $\alpha\equiv (e_1,e_2,\ldots)$ is an edge path (possibly infinite) at $v\in \Lambda(S,Q,G)$ (with $i^{th}$ edge labeled $e_i$) 
and $v'$ is a vertex of $\Gamma(G,S)$ such that $\rho(v')=v$ (equivalently $v'Q=v$), then there is an  edge path $\alpha'\equiv(\alpha_0',e_1,\alpha_1', e_1,\ldots )$ at $v'$ with $\alpha_i'$ a $Q$-edge path of length $< F$ such that the edge path (determined by) $\rho \alpha'$ is $\alpha$. I.e. there is $(Q,F)$- ``approximate" path lifting for $\rho$.
\end{lemma}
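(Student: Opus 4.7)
The plan is to exploit the $G$-action on $\Gamma(G,S)$ by isometries to reduce to a uniform statement about a finite list of configurations, and to describe the target of the approximating $Q$-path as a coset of a finite-index subgroup of $Q$ whose coset representatives in $Q$ have bounded word length.

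First I would reformulate the problem. If $gQ$ and $hQ$ are adjacent in $\Lambda$ via the label $s$, then there exist $p,p'\in Q$ with $gp\cdot s = hp'$, so $g^{-1}h = psp'^{-1}$. Given $v\in gQ$, a $Q$-path $\alpha$ from $v$ ending at $v'\in gQ$ with $(\alpha,s)$ ending in $hQ$ is exactly a $Q$-path from $v$ to a vertex of $gQ\cap hQs^{-1}$. Translating by $g^{-1}$ (an isometry of $\Gamma(G,S)$ preserving $Q$-labels), and setting $k=g^{-1}h\in G$, $w=g^{-1}v\in Q$, we need a $Q$-path of bounded length in $\Gamma(Q,\mathcal{Q})$ from $w$ to a vertex of $Q\cap kQs^{-1}$. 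Since $\Lambda$ is locally finite by Proposition \ref{locfin2} and $G$ acts transitively on its vertices, there are only finitely many $G$-orbits of directed edges, hence finitely many pairs $(k,s)$ to handle.

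For a fixed pair $(k,s)$ witnessing an edge $Q\xrightarrow{s} kQ$, I would identify $Q\cap kQs^{-1}$ explicitly. Since $k=q_1 s q_2^{-1}$ for some $q_i\in Q$, one checks $q_1\in Q\cap kQs^{-1}$. If $q,q'\in Q\cap kQs^{-1}$, writing $q=kps^{-1}$ and $q'=kp's^{-1}$ gives $q^{-1}q'=sp^{-1}p's^{-1}\in Q\cap sQs^{-1}$; conversely any element of $q_1\cdot(Q\cap sQs^{-1})$ lies in $Q\cap kQs^{-1}$. Thus
\[
Q\cap kQs^{-1}\;=\;q_1\,H_s,\qquad H_s:=Q\cap sQs^{-1}.
\]
By Proposition \ref{C8}, $s\in\mathrm{Comm}(Q,G)$, so $H_s$ has finite index in $Q$. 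Fix coset representatives $a_1^{(s)},\dots,a_{m_s}^{(s)}$ for $H_s$ in $Q$ and set $D_s:=\max_i |a_i^{(s)}|$, where $|\cdot|$ denotes word length in $\Gamma(Q,\mathcal{Q})$.

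Then for any $w\in Q$, left-multiplying by $q_1^{-1}w$ (an isometry of $\Gamma(Q,\mathcal{Q})$) gives $d(w,q_1H_s)=d(1,(q_1^{-1}w)^{-1}H_s)=d(1,a_j^{(s)} H_s)\leq D_s$, where $a_j^{(s)}$ is the representative of the coset containing $(q_1^{-1}w)^{-1}$. Define
\[
F \;=\; 1 + \max_{s\in S} D_s,
\]
which is finite since $S$ and each coset index $[Q:H_s]$ are finite. This completes the main statement. The "in particular" part follows by inductively applying the first part: starting from $v'$, lift $e_1$ by prepending a $Q$-path $\alpha_0'$ of length $<F$ chosen by the lemma so that $(\alpha_0',e_1)$ lands in the correct coset (vertex $v_1$ of $\Lambda$), then repeat at the new endpoint for $e_2$, and so on; since each $\alpha_i'$ is a $Q$-path, $\rho$ collapses it to a point and $\rho\alpha'=\alpha$.

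The main obstacle is simply the bookkeeping: keeping the translation by $g^{-1}$ straight, verifying that $Q\cap kQs^{-1}$ really is a single coset of $H_s$ (rather than a union), and confirming that the bound $D_s$ depends only on $s$ and not on $k$ or on the specific edge at $Q$, so that taking the maximum over the finitely many possible labels $s\in S$ yields a single uniform constant $F$.
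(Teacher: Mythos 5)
Your proposal is correct and follows essentially the same route as the paper's proof: reduce by a translation, observe that $H_s=Q\cap sQs^{-1}$ has finite index in $Q$ (since $Q$ is commensurated) so that every vertex of $Q$ lies within a uniformly bounded $\mathcal Q$-distance of the set of vertices from which the $s$-edge lands in the correct coset, and take the maximum over the finitely many edge labels. Your identification of that target set as the coset $q_1H_s$ is in fact slightly more careful than the paper's reduction to ``$h=s$''; the only nit is that the maximum defining $F$ should run over $s\in S^{\pm1}$ (as in the statement and in the paper), since $H_{s^{-1}}$ need not coincide with $H_s$.
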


\begin{proof}
Suppose $v\in gQ$ and the edge labeled $s$ at $v$ ends in $hQ$. By translation, we assume $v=1\in G$ , $g=1$ and $h=s$. As $Q$ is commensurated in $G$, $sQs^{-1}\cap Q$ has finite index in $Q$. Hence there is an integer $F_s$, such that for any vertex $w\in Q$, there is a $Q$-edge path in $\Gamma(S,G)$ of length $< F_s$ from $w$ to a vertex $w'$ of $Q\cap sQs^{-1}$. As $w'\in sQs^{-1}$, $w's\in sQ$. I.e. the edge labeled $s$ at $w'$ ends in $sQ$. Let $F=max\{F_s\}_{s\in S^{\pm 1}}$.
\end{proof}



\begin{remark}
For $\alpha$ and $\alpha'$ as in Lemma \ref{approx}, we call $\alpha'$ a $(Q,F)$-{\it approximate lift of} $\alpha$. Note that lemma \ref{approx} does not imply that if $v$ and $w$ are vertices of the same coset $uQ$ then there are approximate lifts of a path $\alpha$ at $\rho (v)\in \Lambda(S,Q,G)$ to $v$ and $w$ that are $G$ translates of one another in $\Gamma(G,S)$. 
\end{remark}

The next lemma basically has the same proof as lemma 3 of  \cite{M2}.

\begin{lemma}\label{3.7} 
For each vertex $v$ of $\Lambda(S,Q,G)$, there is an edge path ray $s_v$ at $v$, such that for any finite subgraph $C$ of $\Lambda(S,Q,G)$ only finitely many $s_v$ intersect $C$. 
Furthermore, if $w\in v\equiv wQ$ let $s_w$ be a $(Q,F)$-approximate lift of $s_{\rho (w)}$ to $w\in \Gamma(G,S)$ then 

i) 
for any finite subgraph $D$ of $\Gamma(G,S)$ there are only finitely many vertices $w\in \Gamma(G,S)$ such that $s_w$ intersects $D$ non-trivially, and 

ii) for any $w\in G$, only finitely many vertices $z$ of $s_w$ are such that $zQ$ intersects $D$ non-trivially.
\end{lemma}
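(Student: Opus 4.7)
The plan is to adapt the standard escaping-ray construction of Lemma~3 of \cite{M2} to the coset graph $\Lambda \equiv \Lambda(S,Q,G)$ and then push the family down to $\Gamma \equiv \Gamma(G,S)$ using the approximate lifting of Lemma~\ref{approx}. Since $[G:Q]=\infty$, the graph $\Lambda$ is infinite, and by Proposition~\ref{locfin2} it is locally finite. I fix a nested exhaustion $A_0 \subset A_1 \subset \cdots$ of $\Lambda$ by finite connected subgraphs with $\bigcup_n A_n = \Lambda$, iteratively absorbing the finitely many (necessarily finite) bounded components of $\Lambda - A_n$ so that every component of $\Lambda - A_n$ is infinite. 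Writing $n(v) := \min\{n : v \in A_n\}$, I build an edge-path ray $s_v$ at every vertex $v$ by induction, maintaining two properties: (a) once $s_v$ has left $A_n$ it never returns to $A_n$; (b) the $k$th vertex of $s_v$ lies in $A_{n(v)+k} - A_{n(v)+k-1}$ for every $k \geq 1$. At each inductive stage, extension is possible because the current endpoint sits in an infinite component of $\Lambda - A_{n(v)+k-1}$, and the exhaustion is adapted (following \cite{M2}) to a spanning subtree of $\Lambda$ whose every branch is infinite, so each new edge advances the exhaustion level by exactly one.

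The first assertion then follows immediately: given a finite subgraph $C$ of $\Lambda$, pick $N$ with $C \subset A_N$. By~(a), any $v \notin A_N$ has $s_v$ disjoint from $A_N$, so the only rays meeting $C$ are those starting at the finitely many vertices of $A_N$.

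Next, for each vertex $w \in \Gamma$, let $s_w$ be a $(Q,F)$-approximate lift of $s_{\rho(w)}$ starting at $w$, provided by Lemma~\ref{approx}. Given a finite subgraph $D$ of $\Gamma$, set $D' := \rho(D)$, a finite subgraph of $\Lambda$ contained in some $A_N$. For claim~(ii): any vertex $z$ of $s_w$ with $zQ \cap D \neq \emptyset$ has $\rho(z) \in D' \subseteq A_N$, so by~(b) the coset $\rho(z)$ appears among the first $N - n(\rho(w)) \leq N$ coset-vertices traversed by $s_{\rho(w)}$, and the approximate lift contributes at most $F+1$ vertices to $s_w$ in each such coset-block; this yields the finite bound $\leq N(F+1)$ on the number of such $z$. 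For claim~(i): if $s_w$ meets $D$, then $s_{\rho(w)}$ meets $D' \subseteq A_N$, forcing $\rho(w) \in A_N$ by the first assertion. Moreover, a vertex of $s_w$ lying in $D$ is separated from $w$ along $s_w$ by at most $N$ $\Lambda$-edges of $s_{\rho(w)}$ (by~(b)), each contributing at most $F+1$ edges to the lift, so $d_{\Gamma}(w, z) \leq N(F+1)$. Hence $w$ lies in the $N(F+1)$-ball around $D$ in the locally finite graph $\Gamma$, which is finite.

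The main obstacle is securing feature~(b) in the construction of $s_v$: a locally finite, infinite, connected graph does not in general admit, from every vertex, a ray whose successive vertices strictly advance a chosen exhaustion level — a vertex can be a ``local maximum'' of distance to the basepoint. The remedy, standard in \cite{M2}, is to build the exhaustion and a spanning subtree of $\Lambda$ simultaneously: after absorbing bounded complementary components, every vertex has an infinite branch of the tree emanating from it, and the tree-distance levels can serve as $A_n$, so each edge of $s_v$ genuinely walks one level outward. Once~(b) is in hand, the remaining book-keeping for (i) and (ii) is the straightforward counting above.
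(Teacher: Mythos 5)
Your overall route --- push everything down to $\Lambda(S,Q,G)$ via $\rho$, use the uniform bound $F$ from Lemma \ref{approx} on the $Q$-blocks of the approximate lifts, and count --- is the paper's route, but the proof as written hinges on your property (b), that the $k$th vertex of $s_v$ lies in $A_{n(v)+k}-A_{n(v)+k-1}$, and you do not establish (b). The remedy you offer (a spanning subtree all of whose branches are infinite, with tree-distance levels serving as the $A_n$) fails in a general locally finite graph: a vertex of degree one is a leaf of every spanning subtree, and its ray must begin by stepping to its unique neighbour, which then cannot lie one level further out. Worse, (b) can be outright impossible: for the graph consisting of a ray $v_0,v_1,v_2,\dots$ with a pendant path of length $n$ attached at $v_n$, property (b) forces the level of $v_n$ to exceed the level of the $n$th tip by exactly $n$, while also forcing consecutive $v_n$'s to differ in level by exactly one, so all the tips would lie in a single finite $A_m$. (The graph $\Lambda(S,Q,G)$ is vertex transitive, which this example is not, but you never invoke that, and Lemma 3 of \cite{M2} only yields your escaping property (a).)

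Fortunately (b) is dispensable, and removing it recovers the paper's argument. For (ii), properness of $\rho(s_w)=s_{\rho(w)}$ alone shows that only finitely many vertices $z$ of $s_w$ satisfy $\rho(z)\in\rho(D)$. For (i), your first reduction is correct: only finitely many cosets $gQ$ have $s_{gQ}$ meeting $\rho(D)$. Within one such coset, replace your uniform $N$ by the per-coset constant $N_g$, the last position at which the proper ray $s_{gQ}$ visits the finite set $\rho(D)$; any $z$ on $s_w\cap D$ with $w\in gQ$ then lies in one of the first $N_g$ blocks of the lift, so $w$ is within distance $N_g(F+1)$ of $D$ in the locally finite graph $\Gamma(G,S)$, and there are only finitely many such $w$. (The paper runs this as a contradiction: infinitely many distinct $w_i\in gQ$ whose rays pass through a fixed $d\in D$ would force the block index of the occurrence of $d$ to be unbounded, making $s_{gQ}$ return to $\rho(d)$ at unboundedly late times, contradicting properness.) With that substitution your bookkeeping for (i) and (ii) goes through.
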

\begin{proof}
If $\mathcal G$ is a locally finite infinite graph then for each vertex $v$ of $\mathcal G$ there is an edge path ray $s_v$ at $v$, such that for any finite subgraph $C$ of  $\mathcal G$, only finitely many $v$ are such that $s_v$ intersects $C$. (The idea is this: Choose a base vertex $x$. For any integer $n>0$, $\mathcal G-St^n(x)$ has only finitely many components. For the finitely many vertices $v$ in $St(x)$ or a bounded component of $\mathcal G-St(x)$ choose $s_v$ to be an arbitrary edge path ray at $v$. If $v$ is a vertex of $St^2(x)$ or of a bounded component of $\mathcal G-St^2(x)$, and $s_v$ is not defined, then $v$ belongs to an unbounded component of $\mathcal G-St(x)$. Choose $s_v$ to be an edge path ray at $v$ in $\mathcal G-St(x)$. Continue.) Now pick such edge path rays for the vertices of $\Lambda(S,Q,G)$. 

As $\rho(s_w)=s_{\rho(w)}$, $s_w$ intersects $D$ iff $s_{\rho(s)}$ intersects $\rho(D)$.
Hence, we may finish the proof of  i), by showing at most finitely many vertices $v$ of a coset $gQ$ are such that $s_v$ intersects $D$. 
Otherwise, there are infinitely many distinct vertices $v_1,v_2,\ldots $ in $gQ\subset \Gamma(G,S)$ such that each edge path ray $s_{v_i}$ passes through the vertex  $d$ of $D$. 
In $\Lambda(S,Q,G)$ write the edge path ray $s_{gQ}\equiv (e_1,e_2,\ldots )$. By lemma \ref{approx}, we may write  $s_{v_i}=(\alpha_{i,1},e_1,\alpha_{i,2},e_2,\ldots )$ in $\Gamma(G,S)$, where $\alpha_{i,j}$ is a $Q$-edge path of length $< F$. Let $n(i)$ be such that  some vertex of $\alpha _{i,n(i)}$ is $d$. Since the $v_i$ are distinct and the length of each $\alpha_{i,j}$ is $< F$, the sequence of integers $\{n(1),n(2),\ldots\}$ is unbounded. But then the initial vertex of $e_{n(i)}$ (on $s_{gQ}\equiv (e_1,e_2,\ldots )$) is $\rho(d)$. This is impossible since $s_{gQ}\equiv (e_1,e_2,\ldots )$ is proper,  and $i)$ is proved.

Part ii) follows immediately from the fact that $\rho(s_w)=s_{\rho(w)}$, a proper map.
\end{proof}

By lemma \ref{approx}, if two distinct cosets $g_1Q$  and $g_2Q$ of $G$ are connected by an edge in $\Gamma (G,S)$ then they are of Hausdorff distance $\leq F$.  Choose $M$ such that  if two vertices of $Q$ in $\Gamma(G,S)$ are within $2F+1$ of one another, then their $\mathcal Q$-distance is $\leq M$. Let $P$ be the set of all $S$-relations in $G$ of length $\leq 2F+1+M$. Let $\tilde \Gamma$ be $\Gamma_{(G,S)}(P)$. 

The next result is lemma 2 of  \cite{M2}.
\begin{lemma}\label{3.1}
At each vertex $v$ of $\Gamma(G,S)$ there exists a $\mathcal{Q}$-ray $q_v$ such that for any finite subcomplex $C$ in $\Gamma(G,S)$, there are only finitely many vertices $v$ such that $q_v$ meet $C$. $\square$
\end{lemma}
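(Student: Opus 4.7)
The plan is to exploit the fact that a $\mathcal{Q}$-ray starting at $v$ is confined to the left coset $vQ$, so that the problem decomposes over cosets. For each left coset $gQ$ of $Q$ in $G$, the $\mathcal{Q}$-edges of $\Gamma(G,S)$ between vertices of $gQ$ form a subgraph $g\Gamma(Q)$ which is an isometric copy of the Cayley graph $\Gamma(Q,\mathcal{Q})$; since $Q$ is infinite and finitely generated, each $g\Gamma(Q)$ is a connected, locally finite, infinite graph. This reduces the construction of $q_v$ to a ``ray-assignment'' problem inside each coset, exactly of the type already solved in the first paragraph of the proof of Lemma \ref{3.7}.

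First I would apply that construction (the recursive choice based on nested stars $St^n(x)$ of a basepoint) independently inside each coset subgraph $g\Gamma(Q)$. This yields, for every vertex $v\in gQ$, a $\mathcal{Q}$-ray $q_v$ starting at $v$ and lying entirely inside $g\Gamma(Q)$, with the property that for any finite subgraph $B\subset g\Gamma(Q)$, only finitely many vertices $v$ of $gQ$ have $q_v$ meeting $B$. Since we only need the properness conclusion (not any $G$-equivariance), the choices in different cosets may be made independently; alternatively, one can fix coset representatives $\{g_\alpha\}$, make a single choice of rays $\{p_u\}_{u\in Q}$ in $\Gamma(Q,\mathcal{Q})$, and then translate by setting $q_v:=g_\alpha p_{g_\alpha^{-1}v}$ for $v\in g_\alpha Q$.

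Next I would verify the global properness condition. Given a finite subcomplex $C\subset \Gamma(G,S)$, the vertex set of $C$ is finite, so $C$ meets only finitely many cosets, say $g_1Q,\dots,g_kQ$. For any vertex $v$ lying in a coset $gQ$ distinct from all the $g_iQ$, the ray $q_v$ is contained in $g\Gamma(Q)$, which is disjoint from $C$; so $q_v$ does not meet $C$. For each of the finitely many exceptional cosets $g_iQ$, the intersection $C\cap g_i\Gamma(Q)$ is a finite subgraph of $g_i\Gamma(Q)$, and the per-coset construction guarantees that only finitely many vertices $v\in g_iQ$ have $q_v$ meeting it. Summing the finite contributions from the finitely many $g_iQ$ yields the conclusion.

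There is essentially no serious obstacle: the only point to be careful about is that $\mathcal{Q}$-rays really do remain inside cosets (immediate from $\mathcal{Q}\subset Q$), and that the first-paragraph argument of Lemma \ref{3.7} applies verbatim to each $g\Gamma(Q)$ since it is locally finite and infinite. Thus the proof is a direct coset-by-coset application of the construction from Lemma \ref{3.7}.
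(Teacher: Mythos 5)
Your argument is correct, and it is essentially the standard construction: the paper itself gives no proof of this lemma (it is quoted as Lemma 2 of \cite{M2}), but the coset-by-coset reduction you describe --- each $g\Gamma(Q)$ is a connected, locally finite, infinite copy of $\Gamma(Q,\mathcal Q)$, to which the ray-assignment argument from the first paragraph of the proof of Lemma \ref{3.7} applies, and a finite subcomplex $C$ meets only finitely many such coset subgraphs --- is exactly the intended argument. The only points worth making explicit are the ones you already note: $\mathcal Q$-rays stay inside their coset subgraph, and each $g\Gamma(Q)$ is connected (being a Cayley graph), which is what the $St^n(x)$ construction implicitly uses.
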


For each $S$-relation $r$ of $G$, consider the  $\mathcal K$-word $r_{\mathcal K}$ obtained by eliminating from $r$, the $\mathcal Q$-letters (and their inverses). If $v$ is a vertex of $\Gamma(G,S)$ and $\alpha$ the edge path loop corresponding to $r$ at $v$, then $\rho(\alpha)$ (in $\Lambda(S,Q,G))$ has labeling $r_{\mathcal K}$. Let  $\tilde \Lambda(S,Q,G)$ be the 2-complex obtained from $\Lambda(S,Q,G)$ by attaching a 2-cell to each loop $\rho r$ (with label $r_{\mathcal K}$) where $r$ is a loop of $ \Gamma (G,S)$ of length $\leq 2F+M+1$ (only one 2-cell for a given such loop in $\Lambda(S,Q,G)$). Then $\tilde \Lambda(S,Q,G)$ is locally finite and there is a natural map $\tilde \rho:\tilde \Gamma(G,S)\to \tilde \Lambda(S,Q,G)$ extending $\rho$ and respecting the action of $G$. 

\begin{lemma}\label{3.6}
If $k \in {\mathcal K}^{\pm}$ labels an edge of $\tilde{\Gamma}$ from $v$ to $w$ and $r = (e_1, e_2, e_3,\ldots )$ is a $\mathcal{Q}$-ray at $v$, then $r$ is properly homotopic $rel\{v\}$ to $k*(f_1, f_2,\ldots )$, for $(f_1, f_2,\ldots )$ a $\mathcal{Q}$-ray at $w$, by a  homotopy $H$ with image a subset of  $St^{2F+M+1}(Im(r),\tilde \Gamma)$, and the image of $\tilde \rho \circ H$ is a subset of the finite complex $ St(\tilde\rho(k))$.
\end{lemma}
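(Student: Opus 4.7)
The plan is to realize $H$ as a ``ladder'' in $\tilde\Gamma$ whose rails are $r$ and a new ray $s$, and whose faces are $2$-cells coming from relations in $P$. The top rail will be the given $\mathcal Q$-ray $r$ at $v$ with vertices $v = v_0, v_1, v_2, \ldots$; the bottom rail will be $s = k \cdot \beta_0 \cdot \beta_1 \cdots$, so $s$ begins with the edge $k$ from $v$ to $w$ and its tail $\beta_0 \cdot \beta_1 \cdots$ will be the desired $\mathcal Q$-ray $(f_1, f_2, \ldots)$ at $w$. Each rectangular face will be filled by a single $2$-cell attached along a short $S$-relation.

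First, I would apply Lemma \ref{approx} at each vertex $v_i$ of $r$, using (a translate of) the edge labeled $k$, to obtain a $\mathcal Q$-path $\alpha_i$ at $v_i$ of length $<F$ such that $(\alpha_i, k)$ ends at a vertex $\bar w_i \in wQ$; I would take $\alpha_0$ trivial so that $\bar w_0 = w$. Next, the consecutive bottom vertices $\bar w_i$ and $\bar w_{i+1}$ both lie in the coset $wQ$ and are joined in $\Gamma(G,S)$ by the explicit path $k^{-1} \alpha_i^{-1} e_{i+1} \alpha_{i+1} k$ of length $\leq 2F+1$; translating by $w^{-1}$ into $Q$ and invoking the choice of $M$ would then yield a $\mathcal Q$-path $\beta_i$ of length $\leq M$ from $\bar w_i$ to $\bar w_{i+1}$ inside $wQ$. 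The face between consecutive rungs will have boundary $e_{i+1} \cdot (\alpha_{i+1} k) \cdot \beta_i^{-1} \cdot (\alpha_i k)^{-1}$, an $S$-relation of length $\leq 2F+M+1$ which lies in $P$, and hence is filled by a $2$-cell $D_i$ of $\tilde\Gamma$. Cellularly gluing the $D_i$'s would give $H \colon [0,1] \times [0,\infty) \to \tilde\Gamma$ with $H(0,\cdot)=r$, $H(1,\cdot)=s$, and (because $\alpha_0$ is trivial) $H(t,0)=v$ for all $t$, which gives the $rel\{v\}$ condition.

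For the remaining verifications: properness of $H$ will follow from properness of $r$, since each face $D_i$ meets $r$ at $v_i$ and has diameter $\leq 2F+M+1$ in $\tilde\Gamma$, so only finitely many faces meet any compact set; properness of $s$ will follow in the same way, as every vertex of $s$ lies within Cayley distance $F+M+1$ of some $v_i$. The inclusion $\mathrm{Im}(H) \subset St^{2F+M+1}(\mathrm{Im}(r), \tilde\Gamma)$ is then immediate from the diameter bound on $D_i$. Under $\tilde\rho$, every $\mathcal Q$-edge on $\partial D_i$ collapses to a point, leaving the degenerate loop $\tilde\rho(k) \cdot \tilde\rho(k)^{-1}$; since $\tilde\Lambda$ is by construction equipped with a $2$-cell attached along this loop, $\tilde\rho(D_i)$ is exactly that $2$-cell, which lies in the finite complex $St(\tilde\rho(k), \tilde\Lambda)$. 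The part I expect to require the most care is the start of the ladder --- arranging the trivial left rung so that $H$ is genuinely $rel\{v\}$ while still ensuring the (shorter) first face bounds in $\tilde\Gamma$, and simultaneously guaranteeing that the concatenation $\beta_0 \cdot \beta_1 \cdots$ is actually proper --- but all of this is forced by the choice $\alpha_0$ trivial together with the slack in the bound $2F+M+1$ and the local finiteness of $\Gamma$.
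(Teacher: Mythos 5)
Your proposal is correct and takes essentially the same route as the paper's proof: approximate lifts of the edge $k$ at each vertex of $r$ via Lemma \ref{approx}, $\mathcal Q$-paths of length $\leq M$ joining the consecutive endpoints in the target coset, and filling each resulting boundary loop of length $\leq 2F+M+1$ with a $2$-cell of $\tilde\Gamma$ coming from $P$. The differences are purely notational (your $\beta_i$, $\bar w_i$ are the paper's $f_i$, $w_i$), together with your slightly more explicit checks of properness and of the image of $\tilde\rho\circ H$.
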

\begin{proof}
Let $v_i$ be the terminal vertex of $e_i$.  Let $v_0=v$, $w_0=w$, $\alpha _0$ be the empty path. For each $i\geq 1$, lemma \ref{approx} implies there is a $\mathcal Q$-edge path $\alpha_i $ of length $< F$ at $v_i$ so that $(\alpha_i, k)$ ends at $w_i\in kQ$. 
Note that in $\tilde \Gamma$ the distance from $w_i$ to $w_{i+1}$ is $\leq 2F+1$. For $i\geq 1$, let $f_i$ be a $\mathcal Q$-edge path in $\tilde \Gamma$ of length $\leq M$ from $w_{i-1}$ to $w_{i}$. The loop $(\alpha_i,k, f_{i+1}, k^{-1}\alpha_{i+1}^{-1}, e_{i+1}^{-1})$ has length $\leq 2F+1+M$ and so bounds a 2-cell of $\tilde \Gamma$. Hence $(e_1,e_2,\ldots ) $ is properly homotopic to $k\ast (f_1,f_2,\ldots)$ by a homotopy $H$ with image in $St^{2F+1+M}(Im(r)),\tilde \Gamma)$. As each $\alpha_i$ and each $f_i$ is a $\mathcal Q$-word, $\tilde\rho \circ H$ has image in $St(\tilde \rho (k))$. 
\end{proof}


Recall, for each vertex $v\in \tilde\Gamma$, $s_v$ is a $(Q,F)$-approximate lift of $s_{\rho(v)}$ (see lemma \ref{3.7}). 
\begin{lemma}\label{3.8} 
Suppose $D$ is a finite subcomplex of $ \tilde{\Gamma}$. Then there exists a finite complex $E_1(D) \subseteq \tilde{\Gamma}$ such that if $b = (e_1, e_2, e_3,\ldots  )$ is a $\mathcal{Q}$-ray at $v$ with image in $\tilde \Gamma-E_1(D)$, then $b$ is properly homotopic $rel\{v\}$  to $s_v$ by a homotopy in $\tilde \Gamma -D$.
\end{lemma}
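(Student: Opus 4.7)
The plan is to inductively push $b$ onto $s_v$ one $(\alpha_i',k_{i+1})$-block at a time. Since $\mathcal{Q}$-edges in $\Lambda(S,Q,G)$ are loops, the edges of $s_{\rho(v)}$ may be assumed $\mathcal{K}$-labeled, so $s_v=(\alpha_0',k_1,\alpha_1',k_2,\ldots)$ with each $k_i\in\mathcal{K}^{\pm}$ and each $\alpha_i'$ a $\mathcal{Q}$-path of length $<F$ furnished by Lemma~\ref{approx}. Label the corner vertices $v=u_0,u_0',u_1,u_1',\ldots$, where $\alpha_i'$ runs $u_i\to u_i'$ and $k_{i+1}$ runs $u_i'\to u_{i+1}$.

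For each $n\ge 0$ I will build a ray $\beta_n := (\alpha_0',k_1,\ldots,\alpha_{n-1}',k_n)\ast r_n$ with $r_n$ a $\mathcal{Q}$-ray at $u_n$, setting $\beta_0=b$ (so $r_0=b$), together with a proper homotopy $\beta_n\simeq\beta_{n+1}$ rel $v$ built from two sub-moves. The \emph{backtrack move}: $r_n$ is a $\mathcal{Q}$-ray at $u_n$ and $\alpha_n'$ is a $\mathcal{Q}$-path from $u_n$ to $u_n'$ inside the coset $u_nQ$, so $r_n':=(\alpha_n')^{-1}\ast r_n$ is a $\mathcal{Q}$-ray at $u_n'$, and the obvious backtrack cancellation gives $r_n\simeq\alpha_n'\ast r_n'$ rel $u_n$ with image contained in the $F$-neighborhood of $u_n$ in $\tilde{\Gamma}$. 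The \emph{push move}: Lemma~\ref{3.6} applied to $r_n'$ at $u_n'$ and the edge $k_{n+1}$ produces a $\mathcal{Q}$-ray $r_{n+1}$ at $u_{n+1}$ and a proper homotopy $r_n'\simeq k_{n+1}\ast r_{n+1}$ rel $u_n'$ with image in $St^{2F+M+1}(\mathrm{Im}(r_n'),\tilde{\Gamma})$. Concatenating all the $\beta_n\simeq\beta_{n+1}$ then gives the desired homotopy from $b$ to $s_v$ rel $v$ in the limit.

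To choose $E_1(D)$, set $N=2F+M+1$. Lemma~\ref{3.7}(i) supplies the finite set $\mathcal{F}$ of vertices $w$ whose $s_w$ meets $St^N(D)$; Lemma~\ref{3.7}(ii) ensures that, even when $v\notin\mathcal{F}$, only finitely many of the cosets $u_nQ$ along $s_v$ meet $St^N(D)$. Let $E_1(D)$ be a finite subcomplex of $\tilde{\Gamma}$ containing $St^{N+F}(\mathcal{F}\cup D)$ together with a finite compensating region that absorbs the finitely many ``bad'' coset pieces identified above. Provided $b$ avoids $E_1(D)$, we have $v\notin\mathcal{F}$, so $s_v$ avoids $St^N(D)$, every corner $u_n,u_n'$ is at distance $>N$ from $D$, and each backtrack (within $F$ of $u_n$) and each push (within $N$ of $\mathrm{Im}(r_n')$) stays out of $D$. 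Properness of the concatenated homotopy follows from the properness of $s_v$ together with the thinness furnished by Lemma~\ref{3.7}.

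The main obstacle will be controlling the auxiliary $\mathcal{Q}$-rays $r_n,r_n'$, which inhabit distinct cosets along $s_v$ and could a priori stray near $D$. The recursion---$r_{n+1}$ lies within $N$ of $\mathrm{Im}(r_n')$, while $r_n'$ differs from $r_n$ only by a length-$<F$ backtrack prefix---propagates control from $r_0=b$ outward, and Lemma~\ref{3.7}(ii) supplies the crucial fact that only finitely many of the cosets visited by $s_v$ come close to $D$. Ensuring that every $r_n,r_n'$ stays in $\tilde{\Gamma}-St^N(D)$ is precisely what dictates the ``compensating region'' built into $E_1(D)$.
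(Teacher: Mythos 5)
Your inductive scheme is the same as the paper's: write $s_v=(\alpha_0,c_1,\alpha_1,c_2,\ldots)$, prepend $\alpha_n^{-1}$ to the current $\mathcal Q$-ray and push it across $c_{n+1}$ with Lemma \ref{3.6}, then concatenate. The gap is in the construction of $E_1(D)$, i.e.\ precisely at the point you flag as ``the main obstacle'' but do not resolve. The push move at stage $n$ produces a homotopy with image in $St^{N}(\mathrm{Im}(r_n'))$, where $r_n'$ is an entire infinite $\mathcal Q$-ray wandering through the coset $u_n'Q$; so the fact that the corners $u_n,u_n'$ lie far from $D$ controls nothing, and the metric control propagated from $r_0=b$ degrades by $N+F$ at each stage, hence fails for all but the first $O(K/N)$ stages if $b$ avoids $St^K(D)$. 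Your proposed fix --- a ``finite compensating region that absorbs the finitely many bad coset pieces'' located via Lemma \ref{3.7}(ii) --- cannot work as stated: Lemma \ref{3.7}(ii) is a per-$w$ finiteness statement, the bad cosets depend on $v$, and $E_1(D)$ must be chosen before $v$ and $b$ are quantified. You never exhibit the region, and no finite enlargement of $D$ by itself forces the rays $r_n'$ (which live in cosets, not near $D$) to avoid $D$.

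The missing idea is a \emph{uniform} bound coming from the $\Lambda$-level half of Lemma \ref{3.7}: only finitely many of the rays $s_{\bar w}$, $\bar w\in\Lambda(S,Q,G)$, meet the finite set $St(\tilde\rho(D))$, and each of these is proper, so there is a single integer $J$ such that for \emph{every} vertex $w$ of $\tilde\Gamma$, every edge of $s_w$ beyond the $FJ$-th projects into $\tilde\Lambda-St(\tilde\rho(D))$. Now take $E_1(D)\supseteq St^{FJL}(D)$ (with $L=2F+M+1$) together with the finitely many vertices whose $s$-ray meets $St^{FJL}(D)$. The first $FJ$ homotopies are then controlled metrically ($H_j\subset St^{L}(\mathrm{Im}(b_{j-1}))\subset\tilde\Gamma-St^{(FJ-j)L}(D)$), and every later $H_j$ is controlled by projecting to $\Lambda$: by the final clause of Lemma \ref{3.6}, $\tilde\rho\circ H_j\subset St(\tilde\rho(c_j))$, which misses $\tilde\rho(D)$ once $j>FJ$. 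Without this two-regime argument (metric for the finite initial segment, coset-level thereafter) the claim that the whole homotopy avoids $D$ is unsupported. Your properness sketch for the concatenated homotopy is fine once this is in place.
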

\begin{proof}
Let $L=2F+M+1$ (the constant of lemma \ref{3.6}). There are only finitely many vertices $w \in \tilde\Lambda$ such that the edge path rays $s_w$ of  lemma \ref{3.7} intersect $St(\tilde\rho(D))$, non-trivially.  Call these vertices $y_1, y_2,\ldots ,y_l.$  Since each $s_{y_i}$ is proper, there are integers $J_i$ such that each edge of the ray $s_{y_i}$ following the $J_i^{th}$-edge is in $\tilde \Lambda -St(\tilde \rho(D))$. Let $J$ be the maximum $J_i$ for $i = 1,2,\ldots ,l$. By lemma \ref{3.7}, if $w$ is any vertex of $\tilde \Gamma$, and $e$ is the $j^{th}$-edge of $s_w$ for $j>FJ$, then $\tilde \rho(e)= d$ (or a vertex of $d$) for $d$ the $k^{th}$-edge of $s_{\rho(w)}$ for some $k>J$. By the definition of $J$, $d$ does not intersect $St(\tilde \rho(D))$  and so $\tilde \rho (e)$ does not intersect $St(\tilde \rho(D))$. In particular, 

$(\ast)$  If $w$ is any vertex of $\tilde \Gamma$, and $e$ is the $j^{th}$-edge of $s_w$ for $j>FJ$, then $\tilde \rho(e)\subset \tilde \Lambda-St(\tilde \rho D)$.

Let $E_1(D)$ be a compact subcomplex of  $\tilde{\Gamma}$ such that $St^{FJL}(D) \subseteq E_1(D)$ and such that $E_1(D)$ contains the finite set of vertices $v$ in $\tilde{\Gamma}$ such that $s_v$ intersects $St^{FJL}(D)$. 
Assume $b$ and $v$ satisfy the hypothesis of the lemma. The edge path ray $s_v$ (in $\tilde \Gamma-St^{FJL}(D)$) has the form $(\alpha_0,c_1, \alpha_1, c_2,\ldots )$ where $\alpha_i$ is a $\mathcal Q$-path of length $< F$ and $c_i$ is a $\mathcal K$-edge. Here $s_v$ is a $(Q,F)$-approximate lift of   $s_{\rho (v)}=(c'_1,c'_2,\ldots)$ (where $c'_i$ has the same label as $c_i$).
 
Let $v_i,w_i$ be the initial and terminal vertices of $c_i$, respectively. 
Let $b_0$ be the $\mathcal Q$-edge path ray $(\alpha_0^{-1}, b)$. By Lemma \ref{3.6}, $b_0$ is properly homotopic $rel\{v_1\}$ to $c_1 * b_1$, where $b_1$ is an $\mathcal{Q}$-ray at $w_1$, by a proper homotopy $H_1$ with image in $St^{L}(Im(b_0))$. 
In particular, $b_1$ has image in $\tilde \Gamma-St^{(FJ-1)L}(D)$. Again by Lemma \ref{3.6}, $(\alpha_1^{-1},b_1)$ is properly homotopic $rel\{v_2\}$ to $c_2*b_2$, where $b_2$ is a $\mathcal{Q}$-edge path ray, by a proper homotopy $H_2$ with image in $St^{L}(Im(b_1))\subset \tilde \Gamma-St^{(FJ-2)L}(D)$. Iterating the above process, the $\mathcal{Q}$-ray $(\alpha_j^{-1},b_j)$ is properly homotopic $rel\{v_{j+1}\}$ to $c_{j+1} * b_{j+1}$, where $b_{j+1}$ is a $\mathcal{Q}$-ray, by a proper homotopy $H_{j+1}$ with image in $St^{L}(Im(b_j))$. Let $H$ be the homotopy of $b$ to $s_v$ obtained by patching together these $H_i$. For $i\leq FJ$, $H_i$ has image in $\tilde \Gamma-D$. By lemma \ref{3.6}, $\tilde\rho \circ H_j$ has image in $St(\tilde\rho(c_{j})).$  By $(\ast)$, if $j> FJ$ then $\tilde \rho(c_j)$ misses $St(\tilde\rho(D))$. So $St(\tilde\rho(c_j))$ misses $\tilde\rho(D)$. For all positive integers $j$, $H_j$ misses $D$ and  $H$ misses $D$.

It remanins to show that $H$ is a proper. Let $C \subseteq \tilde{\Gamma}$ be a finite subcomplex. Since $\tilde\rho(s_v)$ is proper in $\tilde{\Lambda}$, there exists an integer $R$ such that if $j > R$, then $\tilde\rho (c_j)$ misses $St(\tilde\rho(C))$. As $\tilde\rho \circ H_j$ has image in $St(\tilde\rho(c_j))$,  $H_j$ misses $C$ when $j>R$. Since only finitely many of the proper  homotopies $H_j$ have image that intersect an arbitrary finite subcomplex $C$, $H$ is proper.    
\end{proof}  

\begin{lemma}\label{3.9} 
Suppose $D \subseteq \tilde{\Gamma}$ is compact. Then there exists a compact set $E_2(D) \subseteq \tilde{\Gamma}$ such that if $e$ is an edge in $\tilde{\Gamma} - E_2(D)$ from $v$ to $w$, then the $\mathcal Q$-ray $q_v$ is properly homotopic to $e*q_w$ $rel\{v\}$, by a proper homotopy in $\tilde \Gamma -D$.
\end{lemma}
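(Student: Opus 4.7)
The plan is to split the argument on the label of the edge $e$: if $e$ is labeled by a letter of $\mathcal{Q}$, then $e*q_w$ is itself a $\mathcal{Q}$-ray at $v$ and two applications of Lemma \ref{3.8} suffice; if $e$ is labeled by a letter of $\mathcal{K}$, Lemma \ref{3.6} must first be used to transport a $\mathcal{Q}$-ray across $e$, and only then does Lemma \ref{3.8} bridge the resulting ray to $q_w$.

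To specify $E_2(D)$, set $L = 2F+M+1$ and $D^* = St^L(E_1(D))$. By Lemma \ref{3.1}, only finitely many vertices $u$ of $\tilde\Gamma$ have $q_u\cap D^* \neq \emptyset$, so one can take $E_2(D)$ to be a finite subcomplex containing $D^*$ together with the closed star of every such vertex $u$. Then for any edge $e$ of $\tilde\Gamma - E_2(D)$, both endpoints $v$ and $w$ lie outside $E_2(D)$, the rays $q_v$ and $q_w$ have image disjoint from $D^* \supseteq E_1(D) \cup St^L(D)$, and $e$ itself avoids $E_1(D)$.

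In the first case, $v$ and $w$ lie in the same coset, so $e*q_w$ is a $\mathcal{Q}$-ray at $v$ whose image avoids $E_1(D)$. Applying Lemma \ref{3.8} to both $q_v$ and $e*q_w$ produces proper homotopies $rel\{v\}$ to the reference ray $s_v$, each with image in $\tilde\Gamma - D$; concatenation then yields the desired homotopy. In the second case, Lemma \ref{3.6} applied to the pair $(q_v,e)$ supplies a $\mathcal{Q}$-ray $r$ at $w$ together with a proper homotopy $H$ from $q_v$ to $e*r$ $rel\{v\}$ whose image lies in $St^L(Im(q_v))$. Since $q_v$ avoids $St^L(D)$, the image of $H$ avoids $D$; and since $q_v$ avoids $St^L(E_1(D))$, the ray $r$ (contained in $St^L(Im(q_v))$) avoids $E_1(D)$. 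A final application of Lemma \ref{3.8} at $w$ then pushes both $r$ and $q_w$ to $s_w$ by proper homotopies $rel\{w\}$ in $\tilde\Gamma - D$, and concatenating these with $H$ completes the argument.

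The main technical obstacle is the bookkeeping with nested stars of $D$: one must coordinate the compact sets governing Lemmas \ref{3.6} and \ref{3.8} so that all three of the homotopies used stay in $\tilde\Gamma - D$ simultaneously. Lemma \ref{3.1} is exactly what allows $E_2(D)$ to remain compact after these successive enlargements, since each stage excludes only finitely many vertices.
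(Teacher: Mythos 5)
Your proposal is correct and follows essentially the same route as the paper: the same choice of $E_2(D)$ built from $St^{L}(E_1(D))$ and the finitely many vertices whose $q$-rays meet it, the same case split on whether $e$ is a $\mathcal Q$- or $\mathcal K$-edge, with Lemma \ref{3.8} applied twice in the first case and Lemma \ref{3.6} followed by Lemma \ref{3.8} in the second. Your added remarks verifying that the Lemma \ref{3.6} homotopy avoids $D$ and that the transported ray avoids $E_1(D)$ are exactly the (implicit) checks the paper relies on.
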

\begin{proof} Again let $L=2F+M+1$ (the constant of lemma \ref{3.6}). Let $E_2(D)$ be a compact subcomplex of $ \tilde{\Gamma}$ containing $St^L(E_1(D))$ and the finite set of vertices $x$ such that $q_x$ intersects $St^L(E_1(D))$. If $e \in {\mathcal K}^{\pm1}$, then by lemma \ref{3.6}, $q_{v}$ is properly homotopic to $e*\beta$ $rel\{v\}$, where $\beta$ is a $\mathcal Q$-ray at $w$, and this homotopy has image in $St^{L}(Im(q_v))$. In particular, $\beta$ avoids $E_1(D)$.  By Lemma \ref{3.8}, $\beta$ and $q_w$ are properly homotopic $rel\{w\}$ to $s_w$ by  proper homotopies in $\tilde\Gamma-D$.  Combining these homotopies gives the result.  

If $e \in {\mathcal Q}^{\pm1}$ then lemma \ref{3.8} implies $q_v$ and $e\ast q_w$ are both properly homotopic $rel (v)$, to $s_{v}$ by a proper homotopy in $\tilde \Gamma -D$. Combining homotopies gives the desired homotopy. 
\end{proof} 

\begin{lemma}\label{3.10}
Suppose $s = (s_1, s_2, s_3,\ldots )$ is an edge path ray at a vertex $v$ in $\tilde{\Gamma}$. Then $s$ is properly homotopic to $q_v$ $rel\{v\}$. 
\end{lemma}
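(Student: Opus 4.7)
I would prove this by iteratively applying Lemma \ref{3.9} along the edges of $s$ and patching the resulting homotopies into a single proper homotopy. Write $s=(s_1,s_2,\ldots)$, let $v_0=v,v_1,v_2,\ldots$ be the successive vertices of $s$, and for each $i\ge 0$ set $r_i=(s_1,\ldots,s_i)*q_{v_i}$, so that $r_0=q_v$ and the $r_i$ agree with $s$ on longer and longer initial segments. For each $i\ge 1$, Lemma \ref{3.9} applied to the edge $s_i$ (from $v_{i-1}$ to $v_i$) gives a proper homotopy $h_i$ from $q_{v_{i-1}}$ to $s_i*q_{v_i}$ rel $\{v_{i-1}\}$, with image in $\tilde\Gamma-D$ whenever $s_i\notin E_2(D)$. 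Prepending the identity on the first $i-1$ edges of $s$ yields a proper homotopy $H_i$ from $r_{i-1}$ to $r_i$ rel $\{v\}$ that fixes the first $i-1$ edges pointwise and modifies only the tail.

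I would then assemble the $H_i$ into a single $H:[0,\infty)\times[0,1]\to\tilde\Gamma$ with $H(\cdot,0)=q_v$ and $H(\cdot,1)=s$. Choose a strictly increasing sequence $0=u_0<u_1<\cdots$ with $u_i\to 1$, and on the strip $[0,\infty)\times[u_{i-1},u_i]$ place a time-reparametrization of $H_i$ whose slice at $u_{i-1}$ is $r_{i-1}$ and at $u_i$ is $r_i$. Because $H_i$ leaves the first $i-1$ edges pointwise fixed, for any fixed $t\in[j-1,j]$ the slice $H(t,u)$ stabilizes at the appropriately parametrized edge $s_j$ once $u\ge u_j$; hence $H(\cdot,u)\to s$ uniformly on compacta as $u\to 1^-$, giving continuity after setting $H(\cdot,1)=s$, and the rel $\{v\}$ condition transfers from each strip to $H$.

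For properness, fix a compact $C\subseteq\tilde\Gamma$. By properness of $s$ there exist integers $M$ and $N\ge M$ such that $s_j\cap C=\emptyset$ for $j>M$ and $s_i\notin E_2(C)$ for $i\ge N$. On any strip with $i\ge N$, the ``moving tail'' portion $[i-1,\infty)\times[u_{i-1},u_i]$ avoids $C$ by Lemma \ref{3.9}, while the ``fixed initial'' portion $[0,i-1]\times[u_{i-1},u_i]$ maps into $s_1\cup\cdots\cup s_{i-1}$ and meets $C$ only inside $s_1\cup\cdots\cup s_M$. Together with the $u=1$ slice $s$ meeting $C$ in a bounded prefix, this forces $H^{-1}(C)\subseteq[0,M]\times[0,1]$, a compact set. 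The main obstacle will be the bookkeeping for this patching: arranging the reparametrizations of the $H_i$ so they glue continuously across each seam $u=u_i$ and simultaneously controlling $H^{-1}(C)$ in both coordinates. The crucial feature that makes this tractable is that each $H_i$ modifies only the tail past the first $i-1$ fixed edges, so the only way $H$ can enter $C$ is through the finitely many early edges of $s$ meeting $C$ or through the finitely many $H_i$ with $i<N$.
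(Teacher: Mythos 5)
Your proposal is correct and follows essentially the same route as the paper: both proofs iterate Lemma \ref{3.9} over the successive edges of $s$ and patch the resulting proper homotopies together, with properness coming from the fact that for any compact $C$ only finitely many edges $s_i$ meet $E_2(C)$ (the paper packages this by choosing a nested exhaustion $\{C_i\}$ with $E_2(C_i)\subseteq C_{i+1}$ and placing the $i$-th homotopy on the strip $[i-1,i]\times[0,\infty)$ of a quadrant, which sidesteps your limit-at-$u=1$ continuity step). The only quibble is that your containment $H^{-1}(C)\subseteq[0,M]\times[0,1]$ is a slight overstatement, since the finitely many early homotopies $H_i$ with $i<N$ are merely proper and may meet $C$ outside the first $M$ edges; compactness of $H^{-1}(C)$ still follows, as your closing sentence in effect acknowledges.
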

\begin{proof}
Choose a sequence of compact subcomplexes $\{C_i\}_{i=1}^\infty$ such that $\bigcup_{i=1}^\infty C_i = \tilde{\Gamma}$, $C_i$ is contained in the interior of $C_{i+1}$, and such that $C_{i+1}$ contains $E_2(C_i)$.  Let $v_i$ be the endpoint of $s_i$. Define $H : [0,\infty) \times [0,\infty) \to \tilde{\Gamma}$ as follows: If $R$ is the largest integer such that the edge $s_i$ misses $C_R$, then by definition of $C_R$, $q_{v_{i-1}}$ is properly homotopic $rel\{v_{i-1}\}$ to $s_i\ast q_{v_i}$ by a proper homotopy $H_i$, missing $C_{R-1}$.  Define $H$ on $[i - 1,i] \times [0, \infty)$ to be $H_i$.
 
In order to check that $H$ is proper, it suffices to show that for any compact set $C \subseteq \tilde{\Gamma}$ only finitely many $H_j$ intersect $C$. This follows from the fact that $C \subseteq C_i$ for some index $i$. Since $s$ is proper, there is an integer $W(i)$ such that for all $j \geq W_i$, $s_j$ lies in $\tilde{\Gamma} - C_{i+1}$.  So $H_j$ avoids $C$. Therefore $H$ is proper.    
\end{proof}

This completes the semistability part of our main theorem.

If $H$ is a group and $\phi:H\to H$ is a monomorphism the group with presentation $\langle t, H:t^{-1}ht \hbox{ for all } h\in H\rangle$ is called the {\it ascending} HNN extension of $H$ by $\phi$ and is denoted $H\ast_{\phi}$. 
The main theorem of \cite{HNN} states that if $H$ is a finitely presented group and $\phi:H\to H$ a monomorphism, then the ascending HNN extension $H\ast_{\phi}$ is 1-ended and semistable at $\infty$. 
Consider a general finite presentation of the form $\langle t, h_1,\ldots,h_n: r_1, \ldots ,r_n, t^{-1}h_1t=w_1,\ldots,  t^{-1}h_nt=w_n\rangle$ where $r_i$ and $w_i$ are words in $\{h_1^{\pm 1}, \ldots ,h_n^{\pm 1}\}$ for all $i$. The group $G$ of this presentation is the ascending HNN extension $H\ast_{\phi}$ where $H$ is generated by $\{h_1,\ldots, h_n\}$ and $\phi$ is the monomorphism $\phi:H\to H$, where $\phi(h_i)=w_i$ for all $i$. 
While $G$ is finitely presented it would seem rare that the finitely generated group $H$ would be finitely presented. It has long been suggested that ascending HNN extensions of this form may be a good place to search for non-semistable at $\infty$, finitely presented groups. In \cite{CM}, we show that if 
$H$ is finitely generated and the image of the monomorphism $\phi:H\to H$ has finite index in $H$, then $H$ is commensurated in $H\ast_{\phi}$. As a direct consequence of this result and our main theorem we have:

\begin{corollary} 
Suppose $H$ is a finitely generated group and $\phi:H\to H$ a monomorphism such that $\phi(H)$ has finite index in $H$ then the ascending HNN extension $H\ast_{\phi}$ is semistable at $\infty$. 
\end{corollary}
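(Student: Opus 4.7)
The plan is to apply the Main Theorem (Theorem \ref{Main}) directly with $Q = H$ inside $G = H \ast_\phi$, reducing the corollary to a check of the Main Theorem's hypotheses. First, $G$ is finitely generated: a finite generating set consists of a finite generating set of $H$ together with the stable letter $t$. $H$ is finitely generated by hypothesis, and the commensurated subgroup condition on $H$ is precisely the content of the result from \cite{CM} recalled in the paragraph just above the corollary: because $\phi(H)$ has finite index in $H$, the subgroup $H$ is commensurated in $H \ast_\phi$.

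Next I will verify that $H$ has infinite index in $G$. The assignments $h \mapsto 0$ (for $h \in H$) and $t \mapsto 1$ extend to a well-defined homomorphism $\sigma \colon G \to \mathbb{Z}$, because each defining relator $t^{-1}h t \phi(h)^{-1}$ is sent to $-1 + 0 + 1 + 0 = 0$. Since $\sigma(t^n) = n$, the cosets $\{t^n H : n \in \mathbb{Z}\}$ are pairwise distinct in $G$, so $[G:H] = \infty$.

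If $H$ is infinite, all hypotheses of the Main Theorem are then satisfied, and I conclude that $G = H \ast_\phi$ is $1$-ended and semistable at $\infty$. The only degenerate case is when $H$ is finite. In that case $\phi \colon H \to H$ is an injection of a finite set to itself, hence a bijection, so $G$ is an extension of $H$ by $\mathbb{Z}$ and contains $\mathbb{Z}$ as a finite-index subgroup. Then $G$ is finitely presented with two ends, and as noted in the introduction, two-ended finitely presented groups are semistable at $\infty$.

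There is essentially no obstacle in the argument itself; the heavy lifting has been performed elsewhere. The nontrivial commensurator computation is carried out in \cite{CM}, and the semistability conclusion is supplied by the Main Theorem. What remains here is bookkeeping: verifying finite generation of $G$, infinite index of $H$, and handling the trivial finite-$H$ case separately so that the Main Theorem's infiniteness hypothesis on $Q$ is not violated.
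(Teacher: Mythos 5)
Your proposal is correct and follows exactly the route the paper intends: the paper presents this corollary as a direct consequence of the result from \cite{CM} that $H$ is commensurated in $H\ast_\phi$ together with the Main Theorem. Your additional bookkeeping (the homomorphism to $\mathbb{Z}$ showing $[G:H]=\infty$, and the separate treatment of finite $H$, where $G$ is two-ended) just fills in routine details the paper leaves implicit.
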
 

\section{A Theorem of Lew}
Our goal in this section is to give an alternate proof of a theorem of V. M. Lew.
\begin{theorem}\label{Lew}
{\bf (V. M. Lew)} Suppose $H$ is an infinite  finitely generated subnormal subgroup of the finitely generated group $G$ and $H$ has infinite index in $G$. Then $G$ is 1-ended and semistable at $\infty$. 
\end{theorem}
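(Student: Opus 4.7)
The natural plan is induction on the length $n$ of the subnormal series $H = H_0 \lhd H_1 \lhd \cdots \lhd H_n = G$. When $n = 1$, $H$ is normal in $G$ and therefore commensurated in $G$ (as noted in the introduction, any normal subgroup is commensurated), so the hypotheses of Theorem \ref{Main} are met for the pair $(G, H)$ and we conclude that $G$ is one-ended and semistable at $\infty$.

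For the inductive step $n \geq 2$, the first move is to focus on $H_{n-1}$, which is normal (hence commensurated) in $G$. In the favorable sub-case where $H_{n-1}$ happens to be finitely generated, the argument splits cleanly. If $[G : H_{n-1}] = \infty$, then the pair $(G, H_{n-1})$ directly satisfies the hypotheses of Theorem \ref{Main} and we are done. If instead $[G : H_{n-1}] < \infty$, then $H_{n-1}$ is finitely generated of finite index in $G$, and the inductive hypothesis applied to the series $H \lhd H_1 \lhd \cdots \lhd H_{n-1}$ of length $n-1$ (using $[H_{n-1}:H] = \infty$, which follows from $[G:H] = \infty$) yields that $H_{n-1}$ is one-ended and semistable. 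Both properties transfer to $G$ via the finite-index overgroup $H_{n-1} \leq G$, using the quasi-isometry invariance of one-endedness and of the generalized notion of semistability at infinity.

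The main obstacle is the case in which $H_{n-1}$ is \emph{not} finitely generated. The archetype is $G = \mathbb{Z} \wr \mathbb{Z}$ with $H = \mathbb{Z}$ and intermediate base $B = \bigoplus_{\mathbb{Z}} \mathbb{Z}$; such a $G$ need not admit any infinite, finitely generated, commensurated subgroup of infinite index, so Theorem \ref{Main} cannot be invoked as a black box at the level of $G$. I would handle this case by adapting the proof technology of Section 4 rather than quoting the statement of the Main Theorem: iterate the $(Q,F)$-approximate lifting construction of Lemma \ref{approx} through the $n$ stages of the series, using normality of $H_i$ in $H_{i+1}$ at each stage to keep the successive Schreier coset graphs locally finite with respect to a suitably enlarged generating set. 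The constructions of the rays $s_v$ and $q_v$ and the proper-homotopy Lemmas \ref{3.6} through \ref{3.10} should then have multi-level analogs, obtained by iterating the single-level constructions one stage of the subnormal series at a time, so as to produce for each vertex $v$ of $\Gamma(G,S)$ a canonical $\mathcal{Q}$-ray $q_v$ to which every edge-path ray at $v$ is properly homotopic rel $\{v\}$. One-endedness of $G$ would be handled by the analogous iteration of the proposition at the start of Section 4. I expect essentially all the technical difficulty to lie in orchestrating this multi-level approximate lift coherently and in verifying that the successive proper homotopies patch together to a single proper homotopy in $\Gamma(G,S)$.
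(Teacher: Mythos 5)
Your base case and the sub-case where $N_{n-1}$ is finitely generated are fine, and you have correctly located the real difficulty: when the intermediate terms of the series fail to be finitely generated, Theorem \ref{Main} cannot be applied as a black box. But your plan for that case has a genuine gap. You propose to ``iterate the $(Q,F)$-approximate lifting construction through the $n$ stages of the series, using normality of $H_i$ in $H_{i+1}$ to keep the successive Schreier coset graphs locally finite.'' The entire approximate-lifting machinery (Lemma \ref{approx}, the graph $\Lambda(S,Q,G)$, the $\mathcal Q$-rays) requires the subgroup at each stage to be \emph{finitely generated}: one needs a finite generating set $\mathcal Q\subset S$ to form $\mathcal Q$-rays and to get the constant $F$. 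In the hard case the intermediate groups $N_i$ are precisely the ones that need not be finitely generated, so there is no generating set with which to run the construction at those levels, and ``normality of $H_i$ in $H_{i+1}$'' does not rescue this. Your plan as stated cannot get off the ground in exactly the case it is meant to handle.

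The missing idea is the paper's replacement of $N_{k-1}$ by a finitely generated surrogate. Choose generators $\mathcal H$ of $H$, elements $k_1,\dots,k_t$ projecting to generators of $G/N_{k-1}$, and adjoin to $\mathcal H$ the conjugates $k_i^{\pm1}h_jk_i^{\mp1}$; let $A\le N_{k-1}$ be the (finitely generated) subgroup these elements generate. Intersecting the series with $A$ gives a subnormal series $H=A_0\lhd A_1\lhd\cdots\lhd A_{k-2}\lhd A$ of length $k-1$ with $A$ finitely generated. The correct dichotomy is then on $[A:H]$, not on whether $N_{k-1}$ is finitely generated. If $[A:H]<\infty$, one shows directly that $H$ itself is commensurated in $G$ (conjugates of $H$-generators by the $k_i$ lie in $A$, which is Hausdorff-close to $H$), and Theorem \ref{Main} applies to the pair $(G,H)$. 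If $[A:H]=\infty$, the inductive hypothesis applies to the shorter series and yields semistability of $A$; one then builds $\tilde\Gamma=\Gamma_{(G,S)}(P\cup Q)$ from a finite set $P$ of $\mathcal A$-relations witnessing semistability of $A$ together with the conjugation relations $Q$, and finishes with the coset-patching argument of Lemma \ref{4.3}, which pushes $\mathcal K$-edges across using the $2$-cells from $Q$ and uses semistability inside each coset $vA$ --- not the commensurated-subgroup lifting machinery. Your $\mathbb Z\wr\mathbb Z$ example is instructive here: with $H=\langle a_0\rangle$ one gets $A=\langle a_{-1},a_0,a_1\rangle\cong\mathbb Z^3$, landing in the second branch, where induction plus patching succeeds even though no step of your proposed iteration is available.
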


\begin{proof}
Suppose $k>0$ and $H = N_0 \lhd N_1 \lhd N_2 \lhd \ldots  \lhd N_k = G$ is a subnormal series.  For $k\in\{1,2\}$ and $G$ finitely presented, theorem \ref{Lew} is proved in  \cite{M1} and \cite{M2}. Those proofs easily generalize to the finitely generated case. The result that G is 1-ended can be concluded from results in  \cite{C} or  \cite{S}. A geometric proof of this fact appears in  \cite{LR}.
We may assume that $(G:N_{k-1})=\infty$ as $G$ is semistable at $\infty$ iff any subgroup of finite index is semistable at $\infty$. 

Let $\mathcal{H} = \{h_1, h_2,\ldots  ,h_n\}$ be a finite generating set for $H$.  Now, $G$ has generating set  $S\equiv \{h_1, h_2,\ldots  ,h_n, a_1, a_2,\ldots  ,a_m, k_1, k_2,\ldots ,k_t\}$ where, under the projection map $\rho : G \to G/N_{k-1}$, $\rho(k_1),\ldots ,\rho(k_t)$ generate $G/N_{k-1}$ and the set $\{h_1,\ldots ,h_n,a_1,\ldots ,a_m\}$ is a subset of $ N_{k-1}.$  Let $\mathcal K=\{k_1,\ldots ,k_t\}$. We also assume that conjugates of the $h_i$'s by the $k_j$'s are among $a_1,\ldots ,a_m$ with the corresponding defining relations, say $k_ih_j{k_i}^{-1} \equiv a_{ij}$, and $k_i^{-1}h_jk_i \equiv b_{ij}$ for $i = 1,2,\ldots ,t$ and $j=1,2,\ldots ,n$ so that $a_{ij}, b_{ij} \in \{a_1, a_2,\ldots ,a_m\}$. Define $Q$ to be this set of conjugation relations. 
$$ Q=\{ k_ih_jk_i^{-1}a_{ij}^{-1}, k_i^{-1}h_jk_ib_{ij}^{-1}: i=1,\ldots, t \ \hbox{and} \ j=1,\ldots, n\}$$

Let $A$ be the subgroup of $N_{k-1}$ generated by $\mathcal{A} = \{h_1,\ldots ,h_n,a_1,\ldots ,a_m\}$.  
Let $A_i= N_i\cap A$ for $i\in \{1,\ldots ,n-2\}$. Then the subnormal sequence 
$$H=A_0\triangleleft A_1\triangleleft \cdots \triangleleft A_{k-2}\triangleleft A$$
has length $k-1$. The proof splits naturally into the two cases of whether or not $H$ has finite index in $A$. In the case $H$ has finite index in $A$ we give a straightforward argument showing that $H$ is commensurated in $G$ and by our main theorem $G$ is semistable at $\infty$. Note that if $k=1$ this is the only case (since $A\subset N_0=H$). So when the proof of the first case is concluded, we are in position to apply an induction argument (with base case in hand) to the remaining case. 

Suppose $H$ has finite index in $A$.  Each point of $\Gamma(A,\mathcal A)$ is within a bounded distance of $aH$ for any $a\in A$. In particular the Hausdorff distance between $H$ and $aH$ is bounded.

If $k\in \mathcal K^{\pm 1}$  and $z\in kH$ then $z=kh$ for some $h\in H$. Note that $khk^{-1}\in A$ (it is a product of the $a_{ij}^{\pm 1}$ or $b_{ij}^{\pm 1}$). Since $H$ has finite index in $A$, this point is close to $H$. As each point of $kH$ is close to $H$, left multiplying by $k^{-1}$ shows that each point of $H$ is close to $k^{-1}H$ for all $k\in \mathcal K^{\pm 1}$. We have $H$ is commensurated in $G$. The conditions of our main theorem are satisfied and so 
in the case $H$ has finite index in $A$, $G$ is semistable at $\infty$.  

Now suppose $H$ has infinite index in $A$. 
The subnormal sequence $H=N_0\triangleleft N_1\triangleleft \cdots \triangleleft N_{k-1}\triangleleft G$ has length $k$. Case 1 (or Mihalik's theorem  \cite{M1}) shows that if $k=1$ then $G$ is semistable at $\infty$. Inductively, we assume that if $G'$ is finitely generated and has a subnormal sequence of $H'=N_0'\triangleleft N_1'\triangleleft \cdots \triangleleft N_{k-2}'\triangleleft G'$ of length $k-1$ such that $H'$ is finitely generated and has infinite index in $G'$  then $G'$ is semistable at $\infty$. 

In our case, $H$ has infinite index in $A$, and the $k-1$ length subnormal series $H=A_0\triangleleft A_1\triangleleft \cdots \triangleleft A_{k-2}\triangleleft A$ implies that $A$ is semistable at $\infty$. Hence we may choose a finite set of $\mathcal A$-relations $P$ so that 
$\Gamma_{(A,\mathcal A)} (P)$ is semistable. By using lemma \ref{rel} or remark 1, we may assume that if $r$ and $s$ are $\mathcal A$-rays at $v$ in 
$\Gamma _{(A,\mathcal A)}(P)$ then $r$ and $s$ are properly homotopic $rel\{v\}$ in $\Gamma _{(A,\mathcal A)}(P)$. In this case, let $\tilde\Gamma$ be 
$\Gamma_{(G,S)}(P\cup Q)$ (where $Q$ is the set of conjugation relations defined at the beginning of this proof). 

If $v\in G$ (so $v$ is a vertex of $\tilde \Gamma$) and $C_v$ is a compact subcomplex of $v\Gamma_{(A,\mathcal A)}(P)\subset \tilde \Gamma$ there is a compact subcomplex $D_v$ of $ v\Gamma_{(A,\mathcal A)}(P)$ such that if $r$ and $s$ are edge path rays at $w\in v\Gamma_{(A,\mathcal A)}(P)-D_v$ then $r$ and $s$ are properly homotopic $rel\{v\}$ by a proper homotopy whose image does not intersect $C_v$. Hence if $C$ is a compact subcomplex of $\tilde\Gamma$ and we let $C_v=C\cap v\Gamma_{(A,\mathcal A)}(P)$ (for the finite set of vertices  $v$ such $C\cap v\Gamma_{(A,\mathcal A)}(P)\ne\emptyset$) and let $D=\cup D_v$, then any two $\mathcal A$-rays $r$ and $s$ at $w\in v\Gamma_{(A,\mathcal A)}(P)-D$ are properly homotopic $rel\{w\}$ in $\tilde\Gamma-C$.

We use $\mathcal H$-rays $r_v$, as defined in lemma \ref{3.1}. 

Choose a sequence of compact subcomplexes $\{C_i\}_{i=1}^\infty$ of $\tilde \Gamma$ satisfying the following conditions:
\begin{enumerate}
\item $\bigcup_{i=1}^\infty C_i= \tilde{\Gamma}$ 
\item $St(C_i)$ is contained in the interior of $C_{i+1}$, and the finite set of vertices $v$ such that $r_v$ intersects $C_i$, is a subset of $C_{i+1}$.  
\item If $r$ and $s$ are $\mathcal{A}$-rays both based at a vertex $v$ with images missing $C_i$, then $r$ and $s$ are properly homotopic $rel\{v\}$ by a proper homotopy missing $C_{i-1}$.
\end{enumerate}

For convenience define $C_i=\emptyset$ for $i<1$ and observe that conditions $(1)$, $(2)$, and $(3)$ remain valid for all $C_i$. 
The next lemma concludes the proof of the second case and the theorem. 

\begin{lemma}\label{4.3} If $v$ is a vertex of $\tilde{\Gamma}$, and $s = (s_1, s_2,\ldots )$ is an $S$-ray at $v$ then $s$ is properly homotopic to $r_v$, $rel\{v\}$.

\end{lemma}
\begin{proof}
Assume that $s$ has consecutive vertices $v=v_0, v_1,\ldots $.  By construction, if $v_j \in C_i - C_{i-1}$ then $r_{v_j}$ avoids $C_{i-1}$. Assume $j$ is the largest integer such that $C_j$ avoids $s_i$, we will show $r_{v_{i-1}}$ is properly homotopic to $s_i*r_{v_i}$ 
$rel\{v_{i-1}\}$ by a proper homotopy $H_i$ with image avoiding $C_{j-2}$. 

If $s_i \in \mathcal{A} ^{\pm 1}$, this is clear by condition $(3)$ with $H_i$ avoiding $C_{j-1}$. 
If $s_i \in {\mathcal K}^{\pm 1}$, then $s_i * r_{v_i}$ is properly homtopic $rel\{v_{i-1}\}$ to an 
$\mathcal{A}$-ray, $t_{v_{i-1}}$ (using only 2-cells arising from $Q$) and this homotopy has image in $St(Im(s_i * r_{v_i})) \subset \tilde{\Gamma} - C_{j-1}$. Since $t_{v_{i-1}}$ and $r_{v_{i-1}}$ are $\mathcal{A}$-rays 
with images avoiding $C_{j-1}$, condition (3) on the sets $C_i$ gives a proper homotopy between them $rel\{v_{i-1}\}$ whose image avoids $C_{j-2}$. Patch these two proper homotopies together to obtain $H_i$. 

Let $H$ be the homotopy  $rel\{v\}$ of $s$ to $r_v$, obtained by patching together the homotopies $H_i$. 
We need to check that $H$ is proper. Let $C \subset \tilde \Gamma$ be compact. Choose an index $j$ such that $C \subseteq C_j$. Since $s$ is a proper edge path to infinity, choose an index $N$ such that all edges after the $N^{th}$ edge of $s$ avoid $C_{j+2}$. Then for all $i>N$, $H_i$ avoids $C_j$, so $H$ is proper.
\end{proof}
This concludes the proof of the theorem. 
\end{proof}
\section{ Simple Connectivity at $\infty$}

Recall, a connected locally finite CW-complex $X$ is simply connected at $\infty$ if  for each compact set $C$ in $X$ there is a compact set $D$ in $X$ such that loops in $X-D$ are homotopically trivial in $X-C$. A group $G$ is simply connected at $\infty$ if given some, equivalently any (see theorem 3 of \cite {LR}), finite complex $X$ with $ \pi_1(X)=G$, then the universal cover of $X$ is simply connected at $\infty$.
 
If $G$ is a group and $H$ a subgroup of $G$ there are various notions for the number of ends of the pair $(G,H)$. Chapter 14 of Geoghegan's book \cite{G} gives a good account of these notions. In particular, the idea of the number of filtered ends of the pair $(G,H)$ is developed and compared to the standard number of ends of a pair. In any case, the number of filtered ends of the pair $(G,H)$ is greater than or equal to the number of standard ends of the pair. Proposition 14.5.9 of \cite{G} shows that if $H$ is a normal subgroup of $G$ then the number of ends of $G/H$, the standard number of ends of $(G,H)$ and the number of filtered ends of $(G,H)$ are all the same. In \cite{CM}, we show that if $G$ is  a group with finite generating set $S$ and $Q$ is a finitely generated commensurated subgroup of $G$, then the number of filtered ends of $(G,Q)$ equals the number of ends of  $\Lambda(S,Q,G)$.

\begin{theorem}
Suppose $G$ is a finitely presented group with finite generating set $S$, and $Q$ is a finitely presented, infinite commensurated subgroup of infinite index in $G$. If $Q$ or $\Lambda (S,Q,G)$ is 1-ended, then $G$ is simply connected at $\infty$. 
\end{theorem}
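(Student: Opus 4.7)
The plan is to upgrade the semistability machinery of Section~4 so as to produce null-homotopies of loops that avoid a prescribed compact set, leveraging the 1-endedness hypothesis on $Q$ or on $\Lambda$. By the Main Theorem (whose semistability half has already been proved) $G$ is 1-ended and semistable at $\infty$. Enlarge $S$ if necessary so that it contains $\mathcal Q$, and choose a finite presentation of $G$ whose defining relators include those of a chosen finite presentation of $Q$; let $\tilde\Gamma$ denote the resulting Cayley 2-complex of $G$. Each coset $gQ$ is the vertex set of a subcomplex $g\tilde\Gamma_Q\subseteq\tilde\Gamma$ which is a copy of the (simply connected) Cayley 2-complex of $Q$. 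Extend $\rho$ to a cellular $G$-equivariant map $\tilde\rho:\tilde\Gamma\to\tilde\Lambda$ by attaching to $\Lambda(S,Q,G)$, at each loop $\rho(r)$ (with $r$ a defining relator of $G$), a single 2-cell; then $\tilde\Lambda$ is locally finite with the same end-structure as $\Lambda(S,Q,G)$ and is simply connected, since every loop of $\tilde\Lambda$ is the $\tilde\rho$-image of a loop in the simply connected $\tilde\Gamma$.

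Given a compact $C\subseteq\tilde\Gamma$, I would construct a compact $D\supseteq C$ so that every loop $\alpha$ in $\tilde\Gamma-D$ based at a vertex $v$ bounds a disk in $\tilde\Gamma-C$. First apply Lemmas~\ref{3.8}--\ref{3.10} to homotope $\alpha$ rel $v$, by a proper homotopy through $\tilde\Gamma-C$, into a loop that runs out along the reference ray $r_v$, performs a bounded \emph{excursion} far from $C$, and returns along $r_v$. In Case~1, where $Q$ is 1-ended, the approximate lifting of Lemma~\ref{approx} and the conjugation-style 2-cell moves from the proof of Lemma~\ref{3.6} let us rewrite the excursion as a concatenation of loops each confined to a single translate $g_i\tilde\Gamma_Q$, joined by $\mathcal K$-transitions that cancel in pairs; each such coset loop bounds a disk in its simply connected coset subcomplex, and the 1-endedness of $\tilde\Gamma_Q$ allows that disk to be pushed into the unbounded component of the coset minus $C$. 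In Case~2, where $\Lambda$ is 1-ended, project the excursion to a loop in $\tilde\Lambda-\tilde\rho(D)$; using that $\tilde\Lambda$ is simply connected, 1-ended and carries a cocompact $G$-action, one finds a disk bounding this loop in $\tilde\Lambda-\tilde\rho(C)$ (by a $G$-invariant outward exhaustion of $\tilde\Lambda$). Cell-by-cell approximate lifting of this disk through $\tilde\rho$ yields a disk in $\tilde\Gamma$ bounding $\alpha$ together with auxiliary $\mathcal Q$-loops, each of which lies in a single coset and is null-homotopic there; enlarging $D$ pushes all auxiliary null-homotopies off $C$.

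The main obstacle in either case is converting the $\pi_0$-level hypothesis of 1-endedness into the $\pi_1$-level property of simple connectivity at $\infty$. In Case~1 the delicate point is controlling the size of the coset loops produced by the rewriting: Lemma~\ref{approx} guarantees combinatorial length bounded in terms of the excursion length and the constant $F$, so they fit inside a \textbf{bounded-thickness} tube in $g_i\tilde\Gamma_Q$, and 1-endedness of $Q$ pushes that tube past any preassigned compact. In Case~2 the crux is the disk-pushing step in $\tilde\Lambda$: because even a 1-ended, simply connected, cocompactly acted-upon locally finite 2-complex need not be simply connected at $\infty$ in general (cf.~the Davis examples cited in the introduction), this step must exploit the specific structure of $\tilde\Lambda$ as the image of the Cayley 2-complex of $G$ under $\tilde\rho$, using finiteness of $Q$-relators and the cocompact $G$-action together with 1-endedness to produce the required disk. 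This is where the finite presentability of $Q$ enters essentially, and I expect it to be the hardest technical piece of the argument.
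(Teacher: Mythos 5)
Your setup matches the paper's (Cayley $2$-complex with the $Q$-presentation embedded, simply connected coset subcomplexes $\tilde X(Q,v)$, the extended projection to $\hat\Lambda$, approximate lifting, and a ladder/excursion homotopy), but in both cases the step that actually kills the long $\mathcal Q$-loops is either wrong or missing. In Case~1 you assert that ``the 1-endedness of $\tilde\Gamma_Q$ allows that disk to be pushed into the unbounded component of the coset minus $C$'' (and later that 1-endedness ``pushes that tube past any preassigned compact''). One-endedness lets you push \emph{points and paths} off a compact set, not disks; pushing a filling disk off $C$ inside a single coset complex is exactly simple connectivity at infinity of $Q$, which is not hypothesized. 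Moreover the comparison loops that arise in the ladder are not of uniformly bounded length (their length grows with the excursion $\beta_1$), so the ``bounded-thickness tube'' device only handles the small rungs, not these loops. The paper's mechanism, which you are missing, is Lemma~\ref{Qkill}: a long $\mathcal Q$-loop lying outside $St^{d_1N_1N_3}(C)$ is transported, by a uniformly short escape path read off from $\hat\Lambda$ and approximate lifting, into a coset complex $\tilde X(Q,w)$ that is \emph{entirely disjoint} from $C$, and is filled there using simple connectivity of that coset complex; 1-endedness of $Q$ is used only at the $\pi_0$ level, to reroute the connecting $\mathcal Q$-paths around a large star of $C$ before this lemma is applied.

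In Case~2 you propose to fill the projected loop by a disk in $\tilde\Lambda-\tilde\rho(C)$ and lift the disk cell by cell, and you yourself observe that 1-endedness plus simple connectivity plus cocompactness do not yield such a disk, leaving ``the hardest technical piece'' unresolved. That piece is not resolved in the paper either, because the paper never fills anything in $\tilde\Lambda$: 1-endedness of $\Lambda(S,Q,G)$ is used only to choose, for each vertex of the loop whose coset meets a neighborhood of $\rho(C)$, an escape path in $\Lambda$ and connecting paths avoiding $\rho(St^{N_3}(C))$; approximately lifting these and filling uniformly bounded quadrilaterals replaces the loop by a freely homotopic loop $\hat\beta$ all of whose vertices lie in cosets whose complexes miss $St^{N_3}(C)$. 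The ladder argument then runs entirely inside such $C$-free coset complexes, and the terminal $\mathcal Q$-loop dies in the (simply connected) coset complex of the basepoint. So the correct use of the 1-endedness hypothesis in both cases is $\pi_0$-level rerouting into cosets disjoint from $C$, with all $\pi_1$-level filling delegated to the simple connectivity of the coset complexes (finite presentability of $Q$) plus uniformly bounded fillings of short loops; as written, your argument has a genuine gap at precisely the step where loops must be capped off away from $C$.
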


\begin{proof} 
Suppose $\mathcal P=\langle q_1,\ldots ,q_a, k_1,\ldots ,k_b:R\rangle$ is a finite presentation of the group $G$ such that the $q_i$ generate the infinite commensurated subgroup $Q$, no $k_i$ is an element of $ Q$, and $R$ contains relations $R'$ such that $\langle q_1,\ldots ,q_a:R'\rangle$ is a finite presentation of $Q$. Assume that $Q$ has infinite index in $G$. Let $X$ be the Cayley 2-complex of $\mathcal P$, $\tilde X$ the universal cover for $X$ and $\tilde X(Q,v)\subset \tilde X$  the copy of the universal cover of the Cayley 2-complex for $\langle q_1,\ldots ,q_a:R'\rangle$ containing $v$. Let $\mathcal K=\{k_1,\ldots ,k_b\}$ and $\mathcal Q=\{q_1,\ldots q_a\}$.

Let $N_1$ be an integer such that if cosets $gQ$ and $hQ$ of $G$ are connected by an edge in $\tilde X$, then the Hausdorff distance between $gQ$ and $hQ$ in $\tilde X$ is $\leq N_1$. 
For each relator $r\in R$, let $r'$ be the word obtained from $r$ by removing $\mathcal Q$ letters. For each such (non-trivial) $r'$ and edge loop in $\Lambda(S,Q,G)$ with edge label $r'$, attach a 2-cell and call the resulting locally finite 2-complex $\hat \Lambda(S,Q,G)$.  Note that $\Gamma(S,G)$ is the 1-skeleton of $\tilde X$. Extend the map  $\rho:\Gamma(S,G) \to \Lambda (S,Q,G)$ (see proposition \ref{locfin2}),   to  $\rho :\tilde X\to \hat \Lambda(S,Q,G)$. Let $C$ be a finite subcomplex of $\tilde X$. Let $d_1\geq 1$ be an integer such that for each vertex $v$ of $\rho(C)$, there is a $\mathcal K$-edge path in $\hat \Lambda (S,Q,G)$ of length $\leq d_1$ from $v$ to a vertex of $\hat \Lambda (S,Q,G)-\rho(C)$. In particular, for each vertex $v$ of $\tilde X$, there is an edge path at $v$ of length $\leq N_1d_1$ and with end point $w$ such that $\tilde X(Q,w)\cap C=\emptyset$.
For each $k\in \{k_1,\ldots , k_b\}$ assume that $Q$ and $kQ$ are within Hausdorff distance $N_1$. Choose $N_2$ so that if $q_1$ and $q_2$ are two $Q$-vertices of $\tilde X$ with the edge path distance in $\tilde X$ between $q_1$ and $q_2$ less than or equal to $ 2N_1+1$ then the edge path distance between $q_1$ and $q_2$ in $\tilde X(Q,q_1)$ is $\leq N_2$. In particular, there is a $\mathcal Q$-edge path between $q_1$ and $q_2$ of length $\leq N_2$. Choose $N_3$ such that if $\alpha$ is an edge path loop at $\ast\in \tilde X$ of length $\leq  
2N_1+N_2+1$ then $\alpha$ is homotopically trivial in $St^{N_3}(\ast)$.

\begin{lemma}\label{Qkill} 
Suppose $G$ is a finitely presented group, $Q$ is a finitely presented, infinite commensurated subgroup of infinite index in $G$, $\mathcal P$ is a presentation of $G$ as above, and $X$ is the Cayley 2-complex of $\mathcal P$. If $\alpha$ is a $\mathcal Q$-loop in $\tilde X$, with image in $\tilde X-St^{d_1N_1N_3}(C)$ then $\alpha$ is homotopically trivial in $\tilde X-C$. 
\end{lemma}
\begin{proof}
We may assume  $\ast$ is the initial vertex of $\alpha$. If $\tilde X(Q,v)\cap C=\emptyset$, then as $\alpha$ is homotopically trivial in $\tilde X(Q,v)$ and we are finished. If $\tilde X(Q,v)\cap C\ne\emptyset$, there is an 
edge path $\beta=(b_1,\ldots, b_k)$ at $v$ with $k\leq N_1d_1$ and with end point $w$ such that $\tilde X(Q,w)\cap C=\emptyset$. 
Let $v\equiv v_0,\ldots ,v_k\equiv w$ be the consecutive vertices of $\alpha$.
 For each vertex $x$ of $\alpha$, there is an edge path of length $\leq N_1$ from $x$ to a vertex of $\tilde X(Q,v_1)$ (if $b_1$ is a $\mathcal Q$-edge, this path is trivial) and hence $\alpha$ is homotopic $rel\{0,1\}$ to a loop $(b_1,\alpha_1,b_1^{-1})$, where $\alpha_1$ is a $\mathcal Q$-loop in $\tilde X(Q,b_1)$, by a homotopy in $St^{N_3}(im(\alpha))$. Inductively, $\alpha$ is freely homotopic to a $\mathcal Q$-loop $\alpha_k$ at the end point of $\beta$, by a homotopy in $St^{kN_3}(im(\alpha))\subset \tilde X-C$. As $\tilde X(Q,w)\cap C=\emptyset$ and $im(\alpha_k)\subset \tilde X(Q,w)$, $\alpha_k$ (and hence $\alpha$) is homotopically trivial in $\tilde X-C$.  
\end{proof}

\noindent {\bf Case 1: $Q$ is 1-ended.} There are finitely many vertices $w_1,\ldots, w_n\in \tilde X$ such that $\tilde X(Q,w_i)\cap St^{(d_1N_1+1)N_3}(C)\ne \emptyset$. 
As $\tilde X(Q,w_i)$ is 1-ended, there is a compact subcomplex $D$ of $\tilde X$ such that $St^{(d_1N_1+1)N_3}(C)\subset D$ and for all $i\in \{1,\ldots ,n\}$ and vertices $x,y\in \tilde X(Q,w_i)-D$, 
$x$ and $y$ can be joined by a $\mathcal Q$-edge path in $\tilde X(Q,v_i)-St^{(d_1N_1+1)N_3}(C)$. Now, suppose $\alpha$ is an arbitrary loop in $\tilde X-D$ with initial vertex $v$. Choose $L$ a positive integer such that if $q_1$ and $q_2$ are vertices of $\tilde X(Q,\ast)$ that are of distance $\leq N_1|\alpha|$ apart in $\tilde X$ then they are of distance $\leq L$ in $\tilde X(Q, \ast)$. Choose $E$ such that any edge path loop $\tau$ at a vertex $x$ of $\tilde X$, of length $\leq N_1|\alpha|+L$, is homotopically trivial in $St^E(x)$. 
Let $\beta_1$ be a $\mathcal Q$-path in $\tilde X(Q,v)-St^{(d_1N_1+1)N_3}(C)$ from $v$ to a point 
$$w\in \tilde X-(St^E(C)\cup St^{d_1N_1N_3+L}(C)\cup St^{N_1|\alpha |}(D))$$ 

Write the edge path $\alpha$ as $(e_1,\ldots, e_m)$ with consecutive vertices $v=v_0,\ldots, v_m$.
As $w\in \tilde X(Q,v)$ there is an edge path $\tau_1$ of length $\leq N_1$ from $w=w_1$ to $w_2\in \tilde X(Q, v_2)$. Let $\tau _2$ be an edge path of length $\leq N_1$ from $w_2$ to $w_3\in \tilde X(Q, v_3)$. Inductively, $\tau_m $ is an edge path of length $\leq N_1$ from $w_{m}$ to a vertex $w_{m+1}\in  \tilde X(Q,v)$. (Note that $\tau_i$ may be taken as the trivial path if $e_i$ is a $\mathcal Q$-edge.) As the edge path $(\tau_1,\ldots, \tau_m)$ has length $\leq N_1|\alpha |$, there is a $\mathcal Q$-path $\lambda$, from $w_{m+1}$ to $ w$ of length $\leq L$. 
By the definition of $E$, the loop $\tau\equiv (\tau_1,\ldots ,\tau_m,\lambda)$ at $w$ is homotopically trivial in $\tilde X-C$. Hence, it suffices to show that $\alpha $ is freely homotopic to $\tau$ in $\tilde X-C$. (See figure 1.)


\hspace{1.2in}\includegraphics[scale=.79]{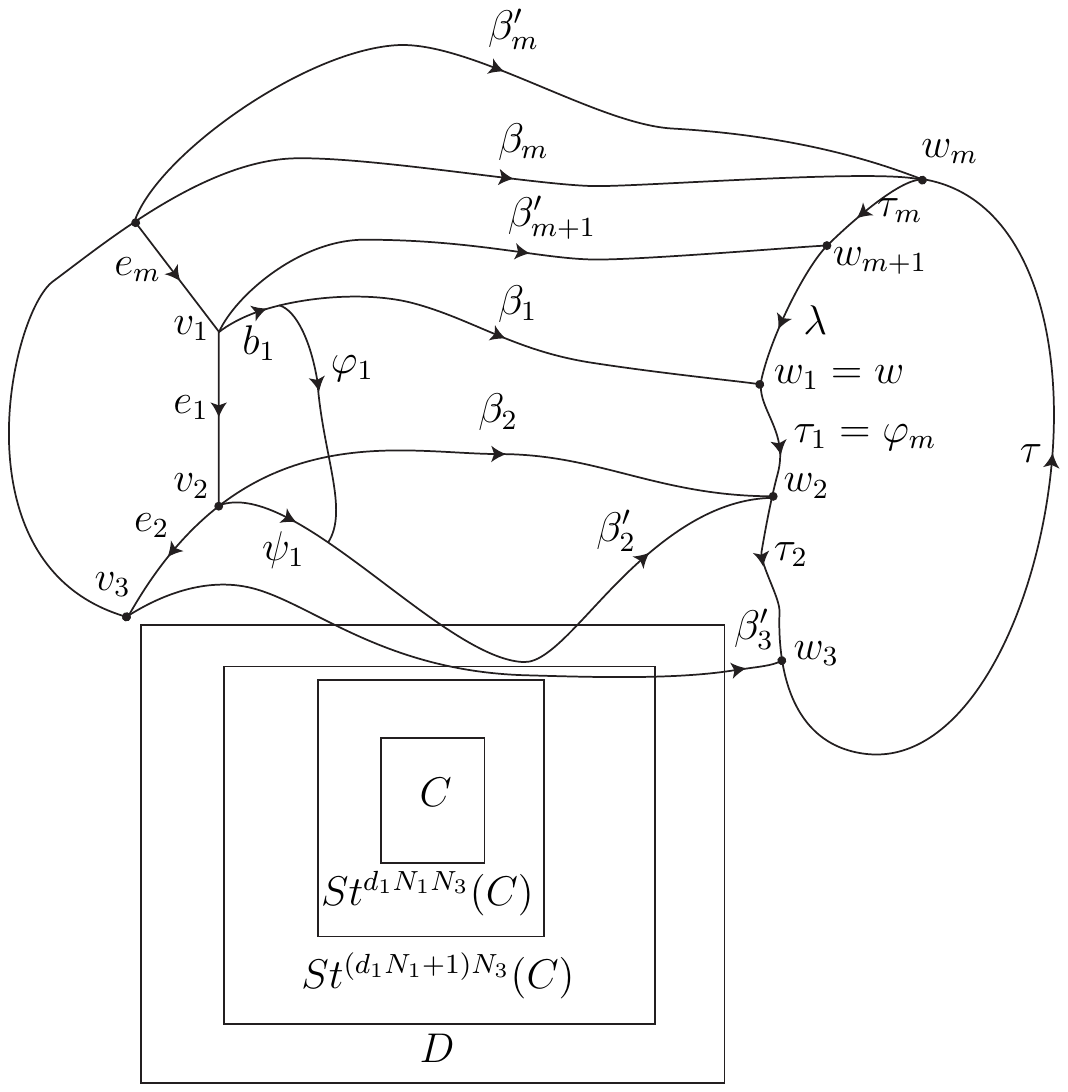}

\centerline{Figure 1}

\medskip

First note that each vertex of $(\tau_1,\ldots, \tau_m)$ is in $\tilde X-D$, since the vertex $w\in \tilde X-St^{N_1|\alpha |}(D)$.
Next, write $\beta _1$ as the edge path $(b_1,\ldots, b_s)$. Let $\phi_0=e_1$ and let $\phi_i$ be an edge path of length $\leq N_1$ from the end point of $b_i$ to a point of $\tilde X(Q,v_2)$. Let $\psi_i$ be a $\mathcal Q$-edge path of length $\leq N_2$ from the end point of $\phi_{i-1}$ to the end point of $\phi_i$. (Choose $\phi_s=\tau_1$.) Then the loop $(\phi_{i-1},\psi_i,\phi_i^{-1},b_i^{-1})$ has length $\leq 2N_1+N_2+1$ and is homotopically trivial by a homotopy in the $N_3$-star of the initial point of $b_i$. String together these homotopies and we have that
the edge path $\langle e_1^{-1},\beta_1,\tau_1)$ is homotopic $rel\{0,1\} $ to the $\mathcal Q$-edge path $\beta_2'\equiv (\psi_1, \ldots, \psi_m)$ by a homotopy with image in $St^{N_3}(im(\beta_1))\subset \tilde X-St^{d_1N_1N_3}(C)$.
By the definition of $D$, there is an $\mathcal Q$-edge path $\beta _2$ with the same end points as $\beta_2'$ and with image in $\tilde X-St^{(d_1N_1+1)N_3}(C)$. By lemma \ref{Qkill}, $\beta_2$ and $\beta_2'$ are homotopic $rel\{0,1\}$ by a homotopy in $\tilde X-C$. Continue inductively until $\beta_m$ and $\beta'_{m+1}$ are defined. Since $w\in \tilde X-St^{d_1N_1N_3+L}(C)$, the path $\lambda$ (of length $\leq L$) has image in $\tilde X-St^{d_1N_1N_3}(C)$.
By lemma \ref{Qkill}, the $\mathcal Q$-loop $(\beta'_{m+1},\lambda, \beta_1^{-1})$ is homotopically trivial in $\tilde X-C$.

\noindent {\bf Case 2: $\Lambda(S,Q,G)$ is 1-ended.}
The letters $N_1$, $N_2$ and $N_3$ remain as in case 1 and we recycle letters used for any  other constant. 

Given $C$ a finite subcomplex of  $\tilde X$. Consider $\rho(C)\subset \hat  \Lambda (S,Q,G)$. Choose $D$ a finite subgraph of $ \Lambda (S,Q,G)$ such that any two vertices of $\hat \Lambda (S,Q,G)-D$ can be connected by a path in $\hat  \Lambda (S,Q,G)-\rho(St^{N_3}(C))$. For each vertex $v$ of $D$ choose a path $\bar \alpha_v$ from $v$ to a vertex of $\hat  \Lambda (S,Q,G)-D$. If $v$ is a vertex of $\hat \Lambda (S,Q,G)-D$, let $\bar \alpha_v$ be the trivial path. Let $N$ be the length of the longest path $\bar \alpha_v$ for $v\in D$. If $v$ is a vertex of $\tilde X$ such that $\rho(v)\in D$ let $\alpha_v$ be an edge path of the form $(\beta_1,\ldots , \beta_m)$ where each $\beta_i$ has length $\leq N_1$ and $\rho(\beta_i)$ has the same end points as the $i^{th}$ edge of  $\bar \alpha_{\rho(v)}$ (so $|\alpha_v|\leq N_1N$).  In analogy with previous terminology, we call $\alpha_v$ an $N_1$-approximate lift of $\bar \alpha_{\rho(v)}$. If $\rho(v)\not \in D$, let $\alpha_v$ be the trivial path. 

Choose an integer $M$ such that if $v$ and $w$ are adjacent vertices of $St(D)$, then there is an edge path $\bar \alpha_{v,w}$ in $\hat \Lambda (S,Q,G)-\rho(St^{N_3}(C))$ of length $\leq M$ from the end point of $\bar\alpha_v$ to the end point of $\bar \alpha_w$. Choose an integer $B$ such that if $\beta$ is a $\tilde X$-edge path of length $\leq (2N+M)N_1+1$ connecting $\ast$ (the vertex of $\tilde X$ corresponding to the identity element of $G$)  to a vertex $q\in Q$ then there is a $\mathcal Q$-edge path of length $\leq B$ connecting $\ast$ to $q$. Choose an integer $A$ such that if $\beta$ is an edge path loop at $\ast$ of length $\leq (2N+M)N_1+B+1$ then $\beta$ is homotopically trivial in $St^{A}(\ast)$.

We next show: If $\beta$ is an edge path loop in $\tilde X-St^A(C)$, then $\beta$ is freely homotopic to a loop $\hat \beta$ by a homotopy in $\tilde X-C$ where $\hat \beta$ can be chosen so that for each vertex $v$ of $\hat \beta$, $\rho(v)\not \in \rho(St^{N_3}(C))$. If $e$ is a directed edge of $\tilde X$ or $ \Lambda (S,Q,G)$, with initial point $a$ and terminal point $b$, then let $[a,b]$ represent this edge. 
Suppose $\beta$ is the edge path $(d_1,d_2,\ldots ,d_n)$ with consecutive vertices $b_1,\ldots , b_{n+1}$. If (cyclically) neither $\rho(b_i)$ nor $\rho(b_{i+1})$ is in $D$, then let $\hat \beta_i$ be the single edge $d_i$. Otherwise, $\rho(b_i)$ and $\rho(b_{i+1})$ belong to $St(D)$. In this case, consider the edge path $\delta_{i}\equiv (\alpha_{b_i}^{-1},d_i, \alpha_{b_{i+1}})$ of $\tilde X$.  

If $\rho(b_i)\ne \rho(b_{i+1})$, the edge path $\bar \alpha_{\rho(b_i),\rho(b_{i+1})}$ joins the end points of $\rho(\delta_{i})$ and has length $\leq M$. Let $\alpha_i$ be an $N_1$-approximate lift of $\bar \alpha_{\rho(b_i),\rho(b_{i+1})}$ to the initial point of $\delta _{i}$ (otherwise, let $\alpha_i$ be the trivial path at the initial point of $\delta_i$). 

Note that the end point of $ \alpha_i$ and the end point of $\delta_{i}$ belong to the same left $Q$-coset. As the length of $( \alpha_i^{-1},\delta_{i})$ is $\leq (2N+M)N_1+1$ there is a $\mathcal Q$-edge path $\gamma_{i}$ of length $\leq B$ from  the initial point to the end point of $( \alpha_i^{-1},\delta_{i})$. The loop $(\gamma_i^{-1}, \alpha_i^{-1}, \delta_i)$ has length $\leq 2N+M+B+1$ and so is homotopically trivial in $\tilde X-C$ (by the definition of $A$). Let $\hat \beta_i=(\alpha_i,\gamma_i)$, for $i\in \{1,\ldots , n\}$. Let $\hat \beta$ be the loop $(\hat \beta_1,\ldots , \hat\beta_n)$. Combining homotopies shows that $\beta$ is freely homotopic to $\hat \beta$ by a homotopy in $\tilde X-C$. As $\rho(\alpha_i)$ avoids $\rho(St^{N_3}(C))$, $\rho(\hat \beta)$ avoids $\rho(St^{N_3}(C))$. (See figure 2.)

\vspace*{-1.5in}

\includegraphics[scale=.68]{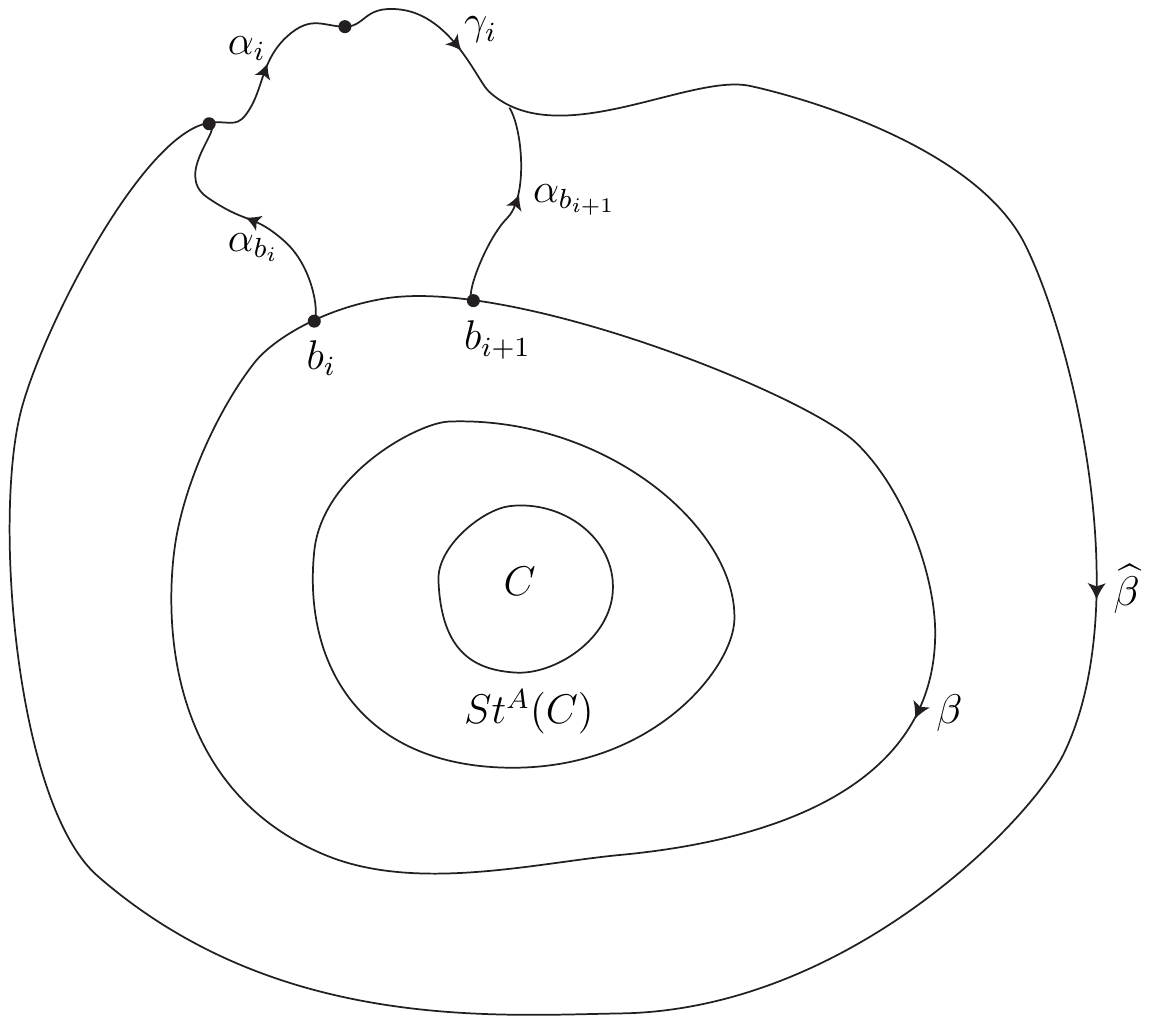}
\vspace*{-3in}

\centerline{Figure 2}

\medskip

We conclude the proof of case 2 by showing  $\hat \beta$ is homotopically trivial in $\tilde X-C$. The proof is analogous to the closing argument of case 1.
Let $v$ be the initial vertex of $\hat \beta$. Choose $L$ a positive integer such that if $q_1$ and $q_2$ are vertices of $\tilde X(Q,\ast)$ that are of distance $\leq N_1|\hat\beta|$ apart in $\tilde X$ then they are of distance $\leq L$ in $\tilde X(Q, \ast)$. Choose $E$ such that any edge path loop $\tau$ at a vertex $x$ of $\tilde X$ and  of length $\leq N_1|\hat\beta|+L$, is homotopically trivial in $St^E(x)$. 
Let $\beta_1$ be a $\mathcal Q$-path  from $v$ to a point 
$w\in \tilde X-St^E(C)$. 
Write the edge path $\hat\beta$ as $(e_1,\ldots, e_m)$ with consecutive vertices $v\equiv v_1,v_2,\ldots ,v_{m}$. 
As $w\in \tilde X(Q,v)$ there is an edge path $\tau_1$ of length $\leq N_1$ from $ w$ to $w_2\in \tilde X(Q, v_2)$. Let $\tau _2$ be an edge path of length $\leq N_1$ from $w_2$ to $w_3\in \tilde X(Q, v_3)$. Inductively, $\tau_m $ is an edge path of length $\leq N_1$ from $w_{m}$ to a vertex $w_{m+1}\in\tilde X(Q,v)$. (Note that $\tau_i$ may be taken as the trivial path if $e_i$ is a $\mathcal Q$-edge.) As the edge path $(\tau_1,\ldots \tau_m)$ begins and ends in $\tilde X(Q,v)$ and has length $\leq N_1|\hat\beta |$, there is a $\mathcal Q$-path $\lambda$, from $w_{m+1}$ to $ w$ of length $\leq L$. 
By the definition of $E$, the loop $\tau\equiv (\tau_1,\ldots ,\tau_m,\lambda)$ at $w$ is homotopically trivial in $\tilde X-C$. Hence, it suffices to show that $\alpha $ is freely homotopic to $\tau$ in $\tilde X-C$.

Each vertex $b$ of $\beta_1$ is such that $\rho(v)=\rho(b)\in \hat\Lambda (S,Q,G)-\rho(St^{N_3}(C))$ and so the image of $\beta_1$ avoids $St^{N_3}(C)$. As in case 1, this implies that the path $(e_1^{-1}, \beta _1,\tau_1)$ is homotopic $rel\{0,1\}$ to a $\mathcal Q$-edge path $\beta_2$ by a homotopy with image in $St^{N_3}(im(\beta_1))\subset \tilde X-C$. Each vertex $b$ of $\beta_2$ is such that $\rho(b)=\rho(v_2)\in \hat\Lambda(S,Q,G)-\rho(St^{N_3}(C))$ and so the image of $\beta_2$ avoids $St^{N_3}(C)$. 
The path $(e_2^{-1}, \beta _2,\tau_2)$ is homotopic $rel\{0,1\}$ to a $\mathcal Q$-edge path $\beta_3$ by a homotopy with image in $St^{N_3}(im(\beta_2))\subset \tilde X-C$. 
Continue inductively until $\beta_{m+1}$ is defined (as a $\mathcal Q$-path from $v$ to $w_{m+1}$). As $\rho(v)\in \hat\Lambda (S,Q,G)-\rho(St^{N_3}(C))$, the $\mathcal Q$-loop $(\beta_1,\lambda^{-1},\beta_{m+1}^{-1})$   has image in $\tilde X(Q,v)\subset \tilde X-C$,
and so is homotopically trivial in $\tilde X-C$. (See figure 3.)

\vspace*{-1.5in}
\includegraphics[scale=.68]{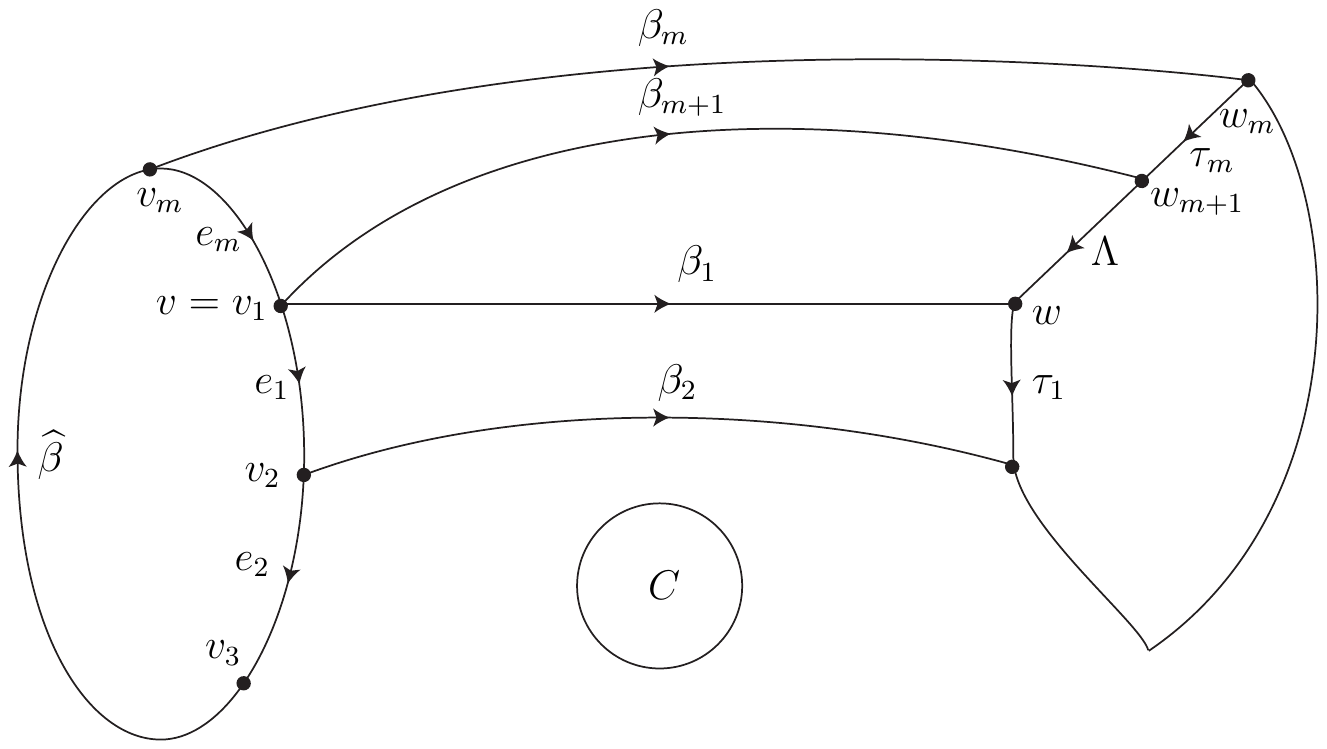}
\vspace*{-3.5in}

\centerline{Figure 3}

\medskip

Combining homotopies produces a null homotopy of $\hat \beta $ with image in $\tilde X-C$.
\end{proof}

\enddocument

DEPARTMENT OF MATHEMATICS WHITE HALL CORNELL UNIVERSITY ITHACA, NY 14853
CURRENT ADDRESS:
DEPARTMENT OF MATHEMATICS AND COMPUTER SCIENCE
STARK LEARNING CENTER
WILKES UNIVERSITY
WILKES-BARRE, PA 18766

\end{document}